\newcommand{\C}{\mathbb{C}}
\newcommand{\R}{\mathbb{R}}
\newcommand{\Z}{\mathbb{Z}}
\newcommand{\Q}{\mathbb{Q}}
\newtheorem{theorem}{Theorem}
\newtheorem{proposition}{Proposition}
\theoremstyle{definition}
\newtheorem{definition}{Definition}
\newtheorem{example}{Example}
\newtheorem{lemma}{Lemma}
\theoremstyle{remark}
\newtheorem*{remark}{Remark}
\newcommand\N{\mathbb N}
\numberwithin{equation}{section}
\numberwithin{theorem}{section}
\numberwithin{lemma}{section}
\numberwithin{proposition}{section}
\numberwithin{definition}{section}
\numberwithin{example}{section}
\title{On the co-factors of degree 6 Salem number beta expansions}
\date{\today}
\author{Jacob J. Stockton\thanks{Research partially supported by NSERC grant 2019-03930 and the University of Waterloo President's Research Award.}\\Department of Pure Mathematics\\University of Waterloo\\Waterloo, ON, Canada, N2L 3G1\\ \texttt{jacob.stockton@hotmail.com}}
\begin{document}
\maketitle

\begin{abstract}
For $\beta > 1$, a sequence $(c_n)_{n \geq 1} \in \Z^{\N^+}$ with $0 \leq c_n < \beta$ is the \emph{beta expansion} of $x$ with respect to $\beta$ if $x = \sum_{n = 1}^\infty c_n\beta^{-n}$. Defining $d_\beta(x)$ to be the greedy beta expansion of $x$ with respect to $\beta$, it is known that $d_\beta(1)$ is eventually periodic as long as $\beta$ is a Pisot number. 
It is conjectured that the same is true for Salem numbers, but is only currently known to be true for Salem numbers of degree 4.
Heuristic arguments suggest that almost all degree 6 Salem numbers admit periodic expansions but that a positive proportion of degree 8 Salem numbers do not.
In this paper, we investigate the degree 6 case.
We present computational methods for searching for families of degree 6 numbers with eventually periodic greedy expansions by studying the co-factors of their expansions.
We also prove that the greedy expansions of degree 6 Salem numbers can have arbitrarily large periods.
In addition, computational evidence is compiled on the set of degree 6 Salem numbers with $\text{trace}(\beta) \leq 15$. We give examples of numbers with $\text{trace}(\beta) \leq 15$ whose expansions have period and preperiod lengths exceeding $10^{10}$, yet are still eventually periodic.
\end{abstract}

\noindent \emph{MSC}: Primary 11A63, Secondary 11R06 \\
\noindent \emph{Keywords}: Salem numbers, beta expansions

\section{Introduction}\label{sec:introduction}

Beta expansions were first introduced by R\'enyi \cite{renyi} in 1957 as a generalization of the base-$b$ representations of real numbers to non-integer bases. Suppose that $\beta > 1$ is a real number and $(c_n)_{n \geq 1}$ is an integer sequence with $0 \leq c_n < \beta$ such that
\[ x = \sum_{n = 1}^\infty c_n\beta^{-n}, \]
for some $x$. Then the sequence $(c_n)_{n \geq 1}$ is said to be the \emph{beta expansion} of $x$ with respect to $\beta$. 
Note that beta expansions for a fixed $x$ are not necessarily unique; in fact, if $\beta = (1+\sqrt{5})/2$, then $x = 1$ has a (countably) infinite number of distinct beta expansions. We therefore define the canonical ``greedy'' beta expansion of $x$ to be the largest beta expansion of $x$, ordered lexicographically. The greedy expansion of a number $x$ may be calculated by using a well-known algorithm due to R\'enyi: begin with $r_0 := x$, and for $n \geq 1$, set $c_n := \lfloor \beta r_{n-1} \rfloor$ and $r_n := \beta r_{n-1} - \lfloor \beta r_{n-1} \rfloor$. Then $(c_n)_{n \geq 1}$ is the greedy beta expansion of $x$, and we write $d_\beta(x) := (c_n)_{n \geq 1}$. 
In this paper, all beta expansions are assumed to be greedy.

One of the first questions that was asked about beta expansions was whether or not the sequence of digits $d_\beta(x)$ in the greedy expansion is eventually periodic.
More precisely, if there exists $p \geq 1$ such that $c_{n} = c_{n+p}$ for all $n \geq 1$, then the sequence $(c_n)_{n \geq 1}$ is said to be \emph{periodic}.
If $(c_{n+m})_{n \geq 1}$ is periodic for some $m \geq 0$, then $(c_n)_{n \geq 1}$ is \emph{eventually periodic}.
For $\beta > 1$, R\'enyi \cite{renyi} and Parry \cite{parry} studied the eventual periodicity of $d_\beta(1)$ by considering the so-called beta transformation $T_\beta \colon [0, 1] \to [0, 1)$ defined by 
\[ T_\beta(x) := \beta x \text{ mod } 1. \]
Numbers for which the orbit $\{T_\beta^n(1)\}_{n \geq 1}$ is finite are called \emph{Parry numbers} (they were called \emph{beta numbers} by Parry). In the special case where $T_\beta^N(1) = 0$ for some $N$,  $\beta$ is called a \emph{simple} Parry number.
Observe that in the algorithm for calculating greedy beta expansions, one has $r_n = T_\beta^n(1)$ and hence $\beta$ is a Parry number if and only if $d_\beta(1)$ is eventually periodic. In this case, the smallest $m \geq 0$ and $p \geq 1$ such that $c_{n+m} = c_{n+m+p}$ for all $n \geq 1$ are called the \emph{preperiod length} and \emph{period length} of the beta expansion, respectively.
Note that for simple Parry numbers, we have $c_n = 0$ for all $n \geq m$ and so (by ignoring trailing zeros) these expansions can be considered to be finite. We therefore define $p = 0$ for simple Parry numbers and write $d_\beta(1) = c_1, \dots, c_m$. When $p > 0$, we instead write $d_\beta(1) = c_1, \dots, c_m : c_{m+1}, \dots, c_{m+p}$.

Recall that a \emph{Pisot number} (or sometimes \emph{Pisot-Vijayaraghavan number}) is an algebraic integer $\beta > 1$ whose Galois conjugates $\alpha$ all satisfy $|\alpha| < 1$. A \emph{Salem number} is an algebraic integer $\beta > 1$ whose Galois conjugates $\alpha$ all satisfy $|\alpha| \leq 1$ with at least one conjugate satisfying $|\alpha| = 1$. If $\beta$ is Pisot or Salem then its minimal polynomial $P(x)$ may simply be referred to as a Pisot or Salem polynomial, respectively. 

It was shown by Schmidt \cite{schmidt} that if all $x \in \Q \cap [0, 1]$ have periodic $\beta$-expansions, then $\beta$ is either a Pisot or a Salem number. Schmidt proved a partial converse, which was independently proven by Bertrand \cite{bertrand}, showing that if $\beta$ is Pisot then $\beta$ is a Parry number. Schmidt conjectured that the same was true for Salem numbers. In \cite{boyd-s4}, Boyd answers this conjecture positively for Salem numbers of degree 4; however, it is still not known for any degree greater than 4. 
The degree 6 case already does not appear to be as well behaved as the degree 4 case. In particular, one always has $m+p < \text{trace}(\beta)$ for degree 4 Salem numbers, but no similar bound seems to hold for degree 6 Salem numbers.
Boyd showed that the Salem number with minimal polynomial
\begin{align}\label{eq:largeexp}
P(x) = x^6 - 3x^5 - x^4 - 7x^3 - x^2 - 3x + 1
\end{align}
has $m+p > 1.1 \times 10^9$, presenting it as a possible counter-example to Schmidt's conjecture. As part of this paper, we further this bound to $m+p > 7.7 \times 10^{10}$.

It is straightforward to see that if $\beta$ is a simple Parry number, i.e. with finite expansion $c_1, \dots, c_m$, then $\beta$ is an algebraic integer satisfying $P_m(\beta) = 0$, where $P_m(x)$ is defined by
\begin{equation}\label{eq:P_m(x)}
P_m(x) := x^m - c_1x^{m-1} - \dots - c_m.
\end{equation}
Furthermore, it can be shown that if $\beta$ is any Parry number, then $\beta$ is a root of the polynomial
\begin{align}\label{eq:r(x)}
 R(x) = \begin{cases} P_{m+p} - P_m(x) &p > 0 \\ P_m(x) &p=0   \end{cases}.
 \end{align}
In particular, if $\beta$ is a Salem number with minimal polynomial $P(x)$, we may write $R(x) = P(x)Q(x)$ for some integer polynomial $Q(x)$. The polynomial $R(x)$ is called the \emph{companion polynomial} of $\beta$ and $Q(x)$ is the \emph{co-factor} of $\beta$. It was shown by Parry \cite{parry} that the roots of $R(x)$ other than $\beta$ must lie in the disk $|z| < \text{min}(2, \beta)$. This bound was improved to $|z| \leq (1 + \sqrt{5})/2$ by Flatto, Lagarias, and Poonen \cite{flatto} and by Solomyak \cite{solomyak}, independently. Hence the roots of the co-factor $Q(x)$ must, also, all lie in the disk $|z| \leq (1 + \sqrt{5})/2$. 

A few elementary facts about the expansions of Salem numbers become apparent after this discussion. First, if $\beta$ is a degree $n$ Salem Parry number, then its expansion must have $m+p \geq n$. Second, a simple result from \cite{boyd-s4} is Salem numbers cannot be simple Parry numbers for if there was such a $\beta$, then $\beta$ would be a root of \eqref{eq:P_m(x)}. However, \eqref{eq:P_m(x)} has at most one positive root, and the minimal polynomial of $\beta$ has the two positive roots $\beta$ and $1/\beta$.

The purpose of this paper is to study the beta expansions of degree 6 Salem numbers in view of Schmidt's conjecture. This is primarily done through the study of the co-factors of these expansions. In Section \ref{sec:beta expansions of salem numbers} we provide explicit descriptions of which Salem numbers of degree 6 have expansions with $(m, p) = (1, 7)$, and we give the co-factors of these expansions. 
The main difference encountered when $p > 6$ (as opposed to when $p \in \{5, 6\}$, as studied in \cite{boyd-s6}) is that there can be more that one possible co-factor for a given $(m, p)$.
In Section \ref{sec:computing co-factors}, we detail computational methods for determining all co-factors for a given expansion length $(m, p)$. 
We also compute all possible co-factors for expansions with $(m, p) = (1, p)$ where $5 \leq p \leq 10$.

One other important difference between the degree 4 case and the degree 6 case which we investigate here is that the co-factors of degree 6 Salem numbers are not always cyclotomic. Section \ref{sec:cyclotomic co-factors} gives a simple criterion in terms of the coefficients of the minimal polynomial to determine when no cyclotomic co-factors can exist.

In Section \ref{sec:expansions for large trace}, we show that degree 6 Salem numbers can have expansions with arbitrarily large periods. We do this by exhibiting an infinite family of Salem numbers, parameterized by $k \in \N$, which have $(m, p) = (1, 8k+6)$.

\section{Beta expansions of Salem numbers}\label{sec:beta expansions of salem numbers}

\subsection{Salem numbers of degree 6}

We turn our attention to Salem numbers of degree 6. A result from \cite{salem}*{p.26} is that all Salem polynomials $P(x)$ satisfy
\[ x^{\text{deg}P(x)}P(x^{-1}) = P(x), \]
i.e. $P(x)$ is a reciprocal polynomial.
This can be seen \cite{smyth}*{Lemma 1} by noticing that if $P(x)$ is a Salem polynomial with root $\beta > 1$, then it has a root $\alpha \neq \pm 1$ on the unit circle; hence, a Galois automorphism mapping $\alpha \mapsto \beta$ will send $\alpha^{-1} \mapsto \beta^{-1}$. Therefore the roots of $P(x)$ appear in reciprocal pairs, and all conjugates of $\beta$ other than $\beta^{-1}$ lie on the unit circle. An immediate consequence is that $\deg P(x)$ is even and $\deg P(x) \geq 4$. 

Degree 6 Salem polynomials have the form
\begin{equation}\label{eq:deg6salem}
P(x) = x^6 + ax^5 + bx^4 + cx^3 + bx^2 + ax + 1,
\end{equation}
which we often abbreviate by simply writing $P(x) = (a, b, c)$. 
Boyd \cite{boyd-s6} gave explicit descriptions for which Salem numbers of degree 6 have $(m, p) = (1, 5)$ and $(m, p) = (1, 6)$, and he gave a sufficient condition for having $(m, p) = (5, 33)$. His conditions are presented in terms linear inequalities involving the coefficients of the minimal polynomial of $\beta$. He also presented a probabilistic heuristic argument, based on the assumption that the points in the orbit $\{T_\beta^n(1)\}_{n \geq 1}$ are distributed randomly. 
This model predicts that the expected value of $m+p$ is roughly given by
\begin{align*}
E(m+p)\begin{cases} < \infty &\text{if $C(\beta) < 1$}, \\ = \infty &\text{if $C(\beta) \geq 1$}. \end{cases}
\end{align*}
where $C(\beta) = (\pi/6)^2 \beta^5 / \text{disc}(\beta)^{1/2}$.
The same model predicts that almost all degree 6 Salem numbers are Parry numbers, but that a positive proportion of degree 8 Salem numbers are not Parry numbers.
Recently, Hichri \cite{hichri} offered some computational evidence supporting this claim for degree 8 Salem numbers. He also generalized some of Boyd's results to degree 8 Salem numbers, giving a complete description of degree 8 Salem numbers having $(m, p) = (1, 7)$ and $(m, p) = (1,8)$. 

The following result is useful in recognizing degree 6 Salem numbers.
\begin{lemma}[\cite{boyd-s6}*{Lemma 2.1}]\label{ureqs}
Let $P(x) = (a,b, c)$ be as in \eqref{eq:deg6salem}. If the polynomial
\[ U(x) = x^3 + ax^2 + (b-3)x + (c-2a) \]
satisfies $U(\pm 2) < 0$ and $U(n) \neq 0$ for $-1 \leq n \leq 1 + \max(|a|, |b-3|, |c-2a|)$, as well as any of one of $U(-1) > 0$, $U(0) > 0$, or $U(1) > 0$, then $P(x)$ is a Salem polynomial.
\end{lemma}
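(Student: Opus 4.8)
The plan is to exploit the reciprocal structure of $P$ through the substitution $y = x + 1/x$. Dividing \eqref{eq:deg6salem} by $x^3$ and regrouping reciprocal terms via $x + 1/x = y$, $x^2 + 1/x^2 = y^2 - 2$, and $x^3 + 1/x^3 = y^3 - 3y$, one obtains the identity $P(x) = x^3\,U(x + 1/x)$, so that $U$ is exactly the trace polynomial of $P$. Roots of $P$ then correspond to roots of $U$ through the quadratic $x^2 - yx + 1 = 0$: a root $y$ of $U$ lifts to a reciprocal pair of roots of $P$, which are real and positive (one $>1$, one $<1$) when $y > 2$, complex conjugates on the unit circle when $-2 < y < 2$, real and negative when $y < -2$, and equal to $\pm 1$ when $y = \pm 2$. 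Thus $P$ is a degree $6$ Salem polynomial precisely when $U$ is irreducible with exactly one root exceeding $2$ and its other two roots in $(-2, 2)$.

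First I would carry out the sign analysis to locate the roots of $U$. Since $U$ is monic of odd degree, $U(y) \to +\infty$ as $y \to +\infty$ and $U(y) \to -\infty$ as $y \to -\infty$. The hypothesis $U(2) < 0$ then forces a root in $(2, \infty)$, supplying the Salem pair $\beta > 1$ and $1/\beta$. If $U(t) > 0$ for some $t \in \{-1, 0, 1\}$, then combining this with $U(-2) < 0$ and $U(2) < 0$ produces sign changes on $(-2, t)$ and on $(t, 2)$, hence two further roots inside $(-2, 2)$. A cubic has at most three real roots, so these are all the roots of $U$; consequently the remaining four roots of $P$ form two conjugate pairs on the unit circle, and $P$ has exactly the root configuration of a Salem polynomial. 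In particular, every real root of $U$ lies in $(-2, \infty)$.

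Next I would promote this to irreducibility. Being monic, $U$ is irreducible over $\Q$ if and only if it has no integer root. Any root of $U$ satisfies the standard bound $|y| \le 1 + \max(|a|, |b-3|, |c-2a|)$, while the sign analysis already shows every real root exceeds $-2$; hence any integer root must lie in the finite range $-1 \le n \le 1 + \max(|a|, |b-3|, |c-2a|)$, which is exactly the range over which $U(n) \neq 0$ is assumed. So $U$ is irreducible and $y_0 := \beta + 1/\beta$ has degree $3$ over $\Q$. Since $\beta$ satisfies $x^2 - y_0 x + 1 = 0$ over $\Q(y_0)$, we get $[\Q(\beta):\Q] \in \{3, 6\}$; the value $3$ is impossible, since it would force $\beta$ to lie in the totally real cubic field $\Q(y_0)$ and hence have three real conjugates, whereas $P$ has only the two real roots $\beta$ and $1/\beta$. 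Therefore $[\Q(\beta):\Q] = 6$, $P$ is the minimal polynomial of $\beta$, and $\beta$ is a Salem number.

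I expect the main obstacle to be the irreducibility step rather than the sign analysis. Two points require care: justifying that the integer roots of $U$ can only occur in the advertised range, which crucially combines the a priori upper bound on the modulus of the roots with the lower bound $y > -2$ coming from the sign conditions (this is what lets the search begin at $n = -1$ rather than at the negative of the Cauchy bound); and the passage from ``$U$ irreducible'' to ``$P$ irreducible,'' where the real-conjugate count is what rules out the degenerate degree-$3$ case. The substitution identity and the location of the single root beyond $2$ are, by contrast, routine once the correspondence between roots of $P$ and roots of $U$ is set up.
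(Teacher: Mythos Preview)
Your proof is correct and follows the standard trace-polynomial approach: the identity $P(x) = x^3\,U(x+1/x)$, the intermediate-value sign analysis placing one root of $U$ in $(2,\infty)$ and two in $(-2,2)$, the Cauchy bound combined with the lower bound $y>-2$ to restrict integer roots of $U$ to the advertised range, and the tower argument $[\Q(\beta):\Q]\in\{3,6\}$ with the totally real obstruction ruling out degree $3$. Each step is sound.

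Note, however, that the paper does not actually prove this lemma; it is quoted verbatim from Boyd \cite{boyd-s6}*{Lemma 2.1} and used as a black box. So there is no ``paper's own proof'' to compare against. Your argument is essentially the natural one and is almost certainly what Boyd had in mind, since the construction of $U$ is precisely designed to make this substitution work.
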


Boyd \cite{boyd-s6} uses this criterion to construct a complete table of Salem numbers of degree 6 and trace at most 15. There are 11836 such numbers, and he was successful in finding eventually periodic expansions for all but 81 of these. For these 81 exceptions, including the Salem root of \eqref{eq:largeexp}, he showed that the expansion would have to be exceptionally large. Table \ref{tab:largeexps} shows all of the Salem numbers with $\text{trace}(\beta) \leq 10$ whose beta expansions were previously unknown. In particular, we have found eventually periodic expansions for 6 out of 18 these numbers. We have also improved the bound on $m+p$ for many others, including showing that $m+p > 7.7 \times 10^{10}$ for \eqref{eq:largeexp}. Table \ref{tab:all data} at the end of this paper gives similar data on all 81 previously unknown expansions. 

Previously, it was thought that examples such as \eqref{eq:largeexp} could be potential counterexamples to Schmidt's conjecture. 
In view of the very large expansions discovered in Table \ref{tab:largeexps} and Table \ref{tab:all data}, it may simply be the case that these examples have arbitrary large (yet eventually periodic) expansions relative to $\text{trace}(\beta)$.
Nevertheless, Boyd's heuristic argument does not reject the possibility that there may exist certain degree 6 Salem numbers for which $m+p = \infty$.

\begin{table}
\centering
\caption{Large beta expansions for $\text{trace}(\beta) \leq 10$} \label{tab:largeexps}
\begin{threeparttable}
\begin{tabular}{ r|c|c|c }
 $(a, b, c)$ & $m$ & $p$ & $m+p >$ \\
 \hline
 $(-3, -1, -7)$     & $*$ & $*$ & $77930975072$ \\  
 $(-5, -2, -11)$    & $26440386599$ & $6051049471$ & N/A \\
 $(-6, -26, -39)$   & $*$ & $*$   &     $86487966351$  \\
 $(-6, -21, -31)$   & $*$ & $*$   &     $84721475914$  \\
 $(-7, -29, -43)$   & $1039779$   &  $90$ &  N/A \\
 $(-7, -28, -41)$   & $159781799$ & $94829$ & N/A  \\
 $(-8, -33, -49)$   & $*$ & $*$   & $29951657970$  \\
 $(-8, -30, -44)$   & $2121493281$ & $188611456$ & N/A \\
 $(-8, -26, -38)$   & $*$ & $*$   &   $90758317726$ \\
 $(-8, -23, -34)$   & $*$ & $*$   &      $82585386156$  \\
 $(-8, -3, -17)$    & $*$ & $*$   &   $33565256553$  \\
 $(-9, -35, -51)$   & $4075324464$ & $425719617$ & N/A\\
 $(-9, -28, -41)$   & $*$ & $*$ & $77441722314$ \\
 $(-9, -6, -20)$    & $*$ & $*$  &  $41820094414$    \\
 $(-10, -41, -61)$  & $*$ & $*$ &   $58743875586$    \\
 $(-10, -40, -59)$  & $*$ & $*$  &  $76519283803$  \\
 $(-10, -36, -52)$  & $23087045143$ & $820400$ &  N/A \\
 $(-10, -4, -21)$   & $*$ & $*$ &    $33346343238$
\end{tabular}
\begin{tablenotes}\footnotesize
\item[*] The beta expansion is still unknown for 12 values of $(a, b, c)$. For these, a lower bound on $m+p$ is given.
\end{tablenotes}
\end{threeparttable}
\end{table}

\subsection{Salem numbers with $(m, p) = (1, p)$}\label{sec:salem numbers with (m,p)=(1,p)}

In \cite{boyd-s4}, it is shown that all degree 4 Salem numbers have periodic expansions with $m = 1$. While this is not true for higher degrees, of all degree 6 Salem numbers with trace at most 15, about 81\% have $m = 1$.
Hence, it may be worth investigating this case further.
Recall that when $m = 1$, the companion polynomial for $\beta$ is given by
\begin{align}\label{eq:rxm1}
R(x) &= P_{p+1}(x) - P_1(x) \nonumber \\
&= x^{p+1} - c_1x^{p} - \dots - c_{p-1}x^2 - (c_p + 1)x + (c_1 - c_{p+1}).
\end{align}
In particular, if we know the coefficients of $R(x)$ by some other method, then we can easily deduce the beta expansion of $\beta$ by simply reading the coefficients of $R(x)$. Conversely, if we know the beta expansion, then $R(x)$ is relatively simple to understand.

The following proposition gives a simple result regarding the co-factor of this expansion.
\begin{proposition}
If $Q(x)$ is the co-factor of a Salem number with expansion $(m, p) = (1, p)$, then $Q(x)$ has no positive roots.
\end{proposition}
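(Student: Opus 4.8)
The plan is to read the result straight off the shape of the companion polynomial in \eqref{eq:rxm1} using Descartes' rule of signs, and then to use the fact that we already know two positive roots of $R(x)$ for free. First I would record the signs of the coefficients of
\[ R(x) = x^{p+1} - c_1x^{p} - \dots - c_{p-1}x^2 - (c_p + 1)x + (c_1 - c_{p+1}). \]
The leading coefficient is $+1$; each coefficient of $x^{p}, \dots, x^{2}$ equals $-c_i \leq 0$ because the greedy digits satisfy $c_i \geq 0$; the coefficient of $x$ is $-(c_p+1) < 0$, which is strictly negative; and only the constant term $c_1 - c_{p+1}$ has an a priori undetermined sign. Hence, reading the nonzero coefficients from top to bottom, there is at most one sign change as we enter the block of nonpositive coefficients and at most one further change as we pass from the strictly negative linear term to the constant term. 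So the number of sign changes is at most two, and by Descartes' rule of signs $R(x)$ has at most two positive real roots, counted with multiplicity.

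Next I would exhibit two such roots. Since $R(x) = P(x)Q(x)$ and $P(x)$ is a (degree 6) Salem polynomial, its reciprocal structure forces $\beta$ and $1/\beta$ to be its only roots off the unit circle; in particular both are roots of $R(x)$, they are distinct positive reals, and each is a \emph{simple} root of $P(x)$. Combining this with the Descartes bound, $R(x)$ has exactly two positive roots, namely $\beta$ and $1/\beta$, each of multiplicity one in $R(x)$. I would then conclude as follows: if $Q(x)$ had a positive root $\gamma$, then $\gamma$ would be a positive root of $R(x)$, hence $\gamma \in \{\beta, 1/\beta\}$; but since $\gamma$ is already a root of $P(x)$, its also being a root of $Q(x)$ would make it a root of $R(x)$ of multiplicity at least two, contradicting the simplicity just established. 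Therefore $Q(x)$ has no positive root.

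The whole argument is short, so there is no genuine obstacle; the only point that needs care is the multiplicity bookkeeping in the final step. It is tempting to stop at ``$R(x)$ has at most two positive roots and $\beta,1/\beta$ account for them, so $Q$ has none,'' but to make this airtight one must apply Descartes \emph{with multiplicity} and use that $\beta$ and $1/\beta$ are simple roots of the minimal polynomial $P(x)$; this is exactly what rules out the borderline possibility that one of $\beta, 1/\beta$ is a common root of $P(x)$ and $Q(x)$. Note also that the choice of sign of the constant term $c_1 - c_{p+1}$ never enters, so the argument works uniformly for every admissible digit string.
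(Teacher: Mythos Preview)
Your argument is correct and is essentially the same as the paper's: both read off the sign pattern of the companion polynomial \eqref{eq:rxm1}, apply Descartes' rule of signs to get at most two positive roots of $R(x)$, and then observe that $\beta$ and $\beta^{-1}$ already account for these. Your explicit multiplicity bookkeeping in the final step (ruling out $\beta$ or $\beta^{-1}$ as a common root of $P$ and $Q$) is a nice touch that the paper leaves implicit; the paper also asserts the constant term is nonnegative (via Parry's criterion $c_1 \geq c_{p+1}$), whereas you correctly note this is unnecessary for the sign-change count.
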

\begin{proof}
Say $\beta$ is a Salem number with minimal polynomial $P(x)$ and co-factor $Q(x)$. Suppose its beta expansion is given by
\[ c_1 : c_{2}, \dots, c_{p+1}. \]
Then its companion polynomial is given by \eqref{eq:rxm1}.
Note that coefficients of $R(x)$ are all non-positive except for the leading and trailing coefficients. By Descartes' rule of signs, say, $R(x)$ has at most two positive roots. However, $R(x) = P(x)Q(x)$ has the two positive roots $\beta$ and $\beta^{-1}$, so $Q(x)$ has no positive roots.
\end{proof}

\subsection{Salem numbers with small expansions}

The purpose of this section is to identify certain families of degree 6 Salem numbers which have eventually periodic expansions. 
First, we introduce some new notation.
\begin{definition}
Let
\begin{enumerate}[(1)]
\item $T_n$ denote the set of all Salem numbers of degree $n$;
\item $P_{m,p}$ denote the set of Salem Parry numbers whose beta expansions have preperiod $m$ and period $p$;
\item $C_{Q(x)}$ denote the set of all Salem numbers whose co-factor is $Q(x)$.
\end{enumerate}
\end{definition}

The following is a known result from Boyd, translated into this notation.
\begin{proposition}[\cite{boyd-s6}*{Proposition 4.1}]\label{prop4.1boyd}
We have $\beta \in T_6 \cap P_{1, 5}$ if and only if the minimal polynomial $P(x) = (a, b, c)$ of $\beta$ satisfies $a \leq b \leq 0$ and $a \leq c \leq 0$ and $a \leq -1$. Moreover, $T_6 \cap P_{1, 5} = P_{1, 5} \cap C_{Q_1(x)}$ where $Q_1(x)$ is the constant function $Q_1(x) = 1$.
\end{proposition}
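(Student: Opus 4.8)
The plan is to handle the ``moreover'' clause first, since it feeds the coefficient computation. For an expansion with $(m,p)=(1,5)$ the companion polynomial of \eqref{eq:rxm1} (taking $p=5$) is monic of degree $p+1=6$. If $\beta\in T_6\cap P_{1,5}$, then its minimal polynomial $P$ divides $R$, and as both are monic of degree $6$ we get $R=P$ and co-factor $Q_1(x)=1$; thus $T_6\cap P_{1,5}\subseteq P_{1,5}\cap C_{Q_1}$. Conversely, if $\beta\in P_{1,5}\cap C_{Q_1}$ then $R=P\cdot 1$ has degree $6$, so $\beta$ is a degree-$6$ Salem number and lies in $T_6\cap P_{1,5}$. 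This proves the second assertion, and in particular shows that throughout we may work with $R=P$.

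For the forward implication of the equivalence, suppose $\beta\in T_6\cap P_{1,5}$ with $d_\beta(1)=c_1:c_2c_3c_4c_5c_6$. Equating coefficients of \eqref{eq:rxm1} and \eqref{eq:deg6salem} through $R=P$ gives $c_1=-a$, $c_2=-b$, $c_3=-c$, and---because $P$ is reciprocal---$c_4=c_2=-b$ and $c_5=c_6=-a-1$. Nonnegativity of the digits yields $b\le 0$ and $c\le 0$, while $c_1=\lfloor\beta\rfloor\ge 1$ (equivalently $c_5\ge 0$) yields $a\le -1$. Finally, since a Salem number is never an integer, each greedy digit obeys $c_n=\lfloor\beta\,T_\beta^{n-1}(1)\rfloor\le\lfloor\beta\rfloor=c_1$; applied to $c_2$ and $c_3$ this gives $-b\le -a$ and $-c\le -a$, i.e.\ $a\le b$ and $a\le c$. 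Collecting these inequalities proves the forward direction.

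For the converse I would start from a degree-$6$ Salem number $\beta$ whose minimal polynomial $(a,b,c)$ satisfies the inequalities and form the candidate string $c_1:\overline{c_2\,c_3\,c_2\,(c_1-1)(c_1-1)}$ with $c_1=-a$, $c_2=-b$, $c_3=-c$. The inequalities make these nonnegative integers with $c_1\ge 1$, and I would verify Parry's admissibility criterion \cite{parry}: comparing each cyclic shift of the period against the full string lexicographically reduces, via $a\le b$ and $a\le c$, to the leading-digit bounds $c_2,c_3\le c_1$, the remaining two shifts beginning with $c_1-1<c_1$. Parry's theorem then produces a unique $\gamma>1$ with this greedy expansion; since $1=\sum_{i\ge 1}c_i\gamma^{-i}$ forces $\gamma$ to be a root of $R=P$ with $\gamma>1$, and $P$ is irreducible with $\beta$ as its only root exceeding $1$, we conclude $\gamma=\beta$. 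Hence $d_\beta(1)$ is the displayed string; because $\beta$ has degree $6$ the companion polynomial must have degree $6$, which rules out a shorter period (a period $1$ would force $\gamma$ to have degree at most $2$) and a zero preperiod (as $c_1\ne c_6$), giving $(m,p)=(1,5)$.

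The hard part will be the admissibility check, and specifically the tie cases $a=b$ and $a=c$, where the leading digits of a shift agree with those of the string and the lexicographic comparison must be carried several positions into the period; I expect to settle these by a short finite computation, exploiting that every period block terminates in the strictly smaller digits $c_1-1$. A related subtlety is excluding degenerate collapses of the period: for boundary data such as $b=c=a+1$ the period block becomes constant, but then $(a,b,c)$ factors (for instance through the cyclotomic factor $x^4+x^3+x^2+x+1$) and is not the minimal polynomial of a degree-$6$ Salem number, so such data never arise under the standing hypothesis $\beta\in T_6$. Should one prefer to build $\beta$ directly from the coefficient inequalities, this non-degeneracy is exactly the Salem property of $(a,b,c)$, which can be certified by checking the hypotheses of Lemma \ref{ureqs}.
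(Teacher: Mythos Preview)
The paper does not actually prove this proposition; it is quoted from Boyd \cite{boyd-s6} and used as a black box. So there is no ``paper's own proof'' to compare against directly. That said, your approach is exactly the template the paper uses for the analogous Propositions~\ref{x^2+1}--\ref{x^2+2x+1}: fix the co-factor (here forced to be $Q_1=1$ by degree count), read off the candidate digit string from the coefficients of $R=PQ$, and verify Parry's criterion together with the exact-period check.

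Your argument is correct. A couple of confirmations on the points you flag as ``hard'':
\begin{itemize}
\item The tie cases resolve precisely as you anticipate. If $a=b$, the shift at $k=2$ matches at position~1 but eventually meets a $-a-1$ against a $-a$ (at position~4 when $a=b=c$, at position~2 when $a=b<c$); the shift at $k=4$ already drops at position~2. If $a=c$, the shift at $k=3$ matches at positions~1,2 and drops at position~3. So Parry's strict inequality holds in every case without further hypotheses.
\item For the period collapse: $p'=1$ forces $b=c=a+1$, and your factorization $(a,a+1,a+1)=(x^4+x^3+x^2+x+1)\bigl(x^2+(a-1)x+1\bigr)$ is correct, so this never occurs for $\beta\in T_6$. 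For the preperiod, your observation $c_1\neq c_6$ rules out $m'=0$, and since $p'\mid 5$ and $m'+p'\ge 6$ (as $\deg P=6$ divides the companion polynomial of degree $m'+p'$), one gets $(m',p')=(1,5)$ immediately.
\end{itemize}
The sentence ``a period $1$ would force $\gamma$ to have degree at most $2$'' is slightly loose (it tacitly assumes $m'\le 1$, which you do know), but the conclusion is right once you combine $m'\le 1$, $p'\mid 5$, and $m'+p'\ge 6$.
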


This result gives a complete description of degree 6 Salem polynomials which admit a finite beta expansion with $(m, p) = (1, 5)$. It also says that the only valid co-factor for such an expansion is $Q(x) = 1$. This is quite reasonable; if $\beta \in T_6 \cap P_{1, 5}$, then the companion polynomial $R(x)$ has degree $1+5 = 6$, so $R(x) = P(x)$ factors trivially.

Boyd gave more results of this nature concerning $T_6 \cap P_{1, 6}$ and $T_6 \cap P_{5, 33}$. We extend these results in Proposition \ref{x^2+1} and Proposition \ref{x^2-x+1} in order to gain a better understanding of $T_6 \cap P_{1,7}$.

We first recall a useful result. Given any sequence of digits, the following gives a simple criterion in order to detect whether the sequence is a beta expansion of some Parry number.
\begin{lemma}[Parry's Criterion \cite{parry}*{p.407}]
Let $(c_n)_{n \geq 1}$ be a sequence in $\N^{\N^+}$. Then $(c_n)_{n \geq 1}$ is the beta expansion for some $\beta > 1$ if and only if $(c_1, c_2, \dots) >_\text{lex} (c_k, c_{k+1}, \dots)$ for all $k \geq 2$ (where $>_\text{lex}$ denotes the lexicographic ordering on sequences).
\end{lemma}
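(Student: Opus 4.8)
The plan is to prove both implications by identifying each shifted sequence $(c_k, c_{k+1}, \dots)$ with the greedy expansion of a remainder produced by R\'enyi's algorithm. Throughout, write $\sigma^k(c) = (c_{k+1}, c_{k+2}, \dots)$ for the shift, so that the hypothesis ``$(c_1, c_2, \dots) >_\text{lex} (c_k, c_{k+1}, \dots)$ for all $k \ge 2$'' reads $\sigma^k(c) <_\text{lex} (c_n)$ for all $k \ge 1$, and for a bounded non-negative integer sequence $\mathbf a = (a_n)$ and $x > 1$ set $v_x(\mathbf a) = \sum_{n \ge 1} a_n x^{-n}$. For the forward direction, suppose $(c_n) = d_\beta(1)$ with remainders $r_0 = 1$, $r_n = T_\beta^n(1)$ and $c_n = \lfloor \beta r_{n-1}\rfloor$. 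Running the algorithm from $r_{k-1}$ instead of from $1$ shows $d_\beta(r_{k-1}) = (c_k, c_{k+1}, \dots)$. The auxiliary fact is that $x \mapsto d_\beta(x)$ is strictly increasing on $[0,1]$ for the lexicographic order: if $x \le y$ then $\lfloor \beta x\rfloor \le \lfloor \beta y\rfloor$, and equality passes the inequality to the remainders, so by induction $d_\beta(x) \le_\text{lex} d_\beta(y)$; since $x = v_\beta(d_\beta(x))$ is recovered from its expansion, distinct values give distinct expansions and the order is strict. As $r_{k-1} \in [0,1)$ for $k \ge 2$, we get $r_{k-1} < 1 = r_0$, whence $(c_k, c_{k+1}, \dots) <_\text{lex} (c_1, c_2, \dots)$.

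For the converse, assume $\sigma^k(c) <_\text{lex} (c_n)$ for all $k \ge 1$. Comparing first coordinates gives $c_k \le c_1$, so the digits are bounded and $v_x(c)$ converges for all $x > 1$; moreover $c_1 \ge 1$, else the condition forces $c \equiv 0$. The map $x \mapsto v_x(c)$ is continuous, strictly decreasing on $(1,\infty)$, tends to $0$ as $x \to \infty$, and exceeds $1$ for $x$ near $1$ (the one exception being $c = (1,0,0,\dots)$, which genuinely is not an expansion of $1$ for any $\beta > 1$ and must be excluded), so there is a unique $\beta > 1$ with $v_\beta(c) = 1$. Put $r_k = v_\beta(\sigma^k(c)) = \sum_{n\ge1} c_{k+n}\beta^{-n}$, so $r_0 = 1$; a direct series manipulation yields the recursion $\beta r_{k-1} = c_k + r_k$ for all $k \ge 1$. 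Hence, once we know $r_k \in [0,1)$ for every $k \ge 1$, it follows that $c_k = \lfloor \beta r_{k-1}\rfloor$ and $r_k = T_\beta(r_{k-1})$, i.e. the $r_k$ are exactly the greedy remainders of $1$ and $(c_n) = d_\beta(1)$.

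Everything thus reduces to $r_k < 1$ for all $k \ge 1$ (non-negativity is immediate), and this is the crux and the main obstacle. The difficulty is that the hypothesis is purely lexicographic while the goal $v_\beta(\sigma^k c) < v_\beta(c)$ concerns numerical values, and lexicographic order does not imply numerical order for arbitrary non-negative integer sequences; the admissibility condition is precisely what bridges this. The engine is the first-difference estimate: if $\mathbf a <_\text{lex} c$ with first disagreement at index $j$, then cancelling common initial terms and bounding the remaining tail gives $v_\beta(\mathbf a) - v_\beta(c) \le \beta^{-j}\big(v_\beta(\sigma^j \mathbf a) - 1\big)$. The apparent circularity (a tail bound in terms of another tail value) is broken by an extremal argument: set $s = \sup_{k \ge 1} r_k$, pick $k_i$ with $r_{k_i} \to s$, and use that the digits lie in the finite set $\{0,\dots,c_1\}$ to extract, by a diagonal argument, a coordinatewise limit $\mathbf a$ of the shifts $\sigma^{k_i}(c)$; dominated convergence gives $v_\beta(\mathbf a) = s$, and coordinatewise limits preserve $\le_\text{lex} c$. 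If $s > 1$ then $\mathbf a \ne c$, so $\mathbf a <_\text{lex} c$, and the estimate with $v_\beta(\sigma^j \mathbf a) \le s$ forces $s - 1 \le \beta^{-j}(s-1)$ with $\beta^{-j} < 1$, a contradiction; hence $s \le 1$. It then remains to upgrade $r_k \le 1$ to the strict inequality for finite $k \ge 1$: applying the estimate to the genuine strict tail $\sigma^k(c) <_\text{lex} c$, one checks that $r_k = 1$ forces an equality attainable only for the zero tail, contradicting strictness of the hypothesis. This strictness step, together with the mild degeneracies in constructing $\beta$, is where the real care is needed; I expect the extremal bound on $\sup_k r_k$ to be the decisive move, the forward implication and the reduction being otherwise routine.
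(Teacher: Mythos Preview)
The paper does not give its own proof of this lemma: it is quoted as Parry's Criterion with a citation to \cite{parry}*{p.407} and used as a black box throughout. So there is nothing in the paper to compare your argument against.

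That said, your outline is essentially the standard proof and is correct in substance. The forward direction is routine once one knows that $x \mapsto d_\beta(x)$ is a strictly increasing injection from $[0,1]$ into the set of sequences with the lexicographic order; your inductive argument for this is fine. For the converse, your construction of $\beta$ as the unique solution of $\sum_{n\ge 1} c_n x^{-n} = 1$ and the reduction to $r_k \in [0,1)$ are exactly the classical moves. The crucial step---bounding $\sup_{k\ge 1} r_k \le 1$ via the first-difference estimate
\[
v_\beta(\mathbf a) - v_\beta(c) \le \beta^{-j}\bigl(v_\beta(\sigma^j \mathbf a) - 1\bigr)
\]
and a compactness/extremal argument on the shift space---is correct, and your upgrade to strict inequality also goes through: if $r_k = 1$, equality in the estimate forces $c_n = 0$ for all $n > j$, whence $r_{k+j} = 0$, contradicting the simultaneous consequence $r_{k+j} = 1$. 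You might state that last contradiction more explicitly rather than leaving it as ``one checks''. Your remark that the sequence $(1,0,0,\dots)$ is a genuine degenerate exception is accurate; the lemma as stated in the paper tacitly excludes it.
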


One may obtain similar results to Proposition \ref{prop4.1boyd} for arbitrary $T_n \cap P_{m, p}$ in the following way. First, choose a fixed co-factor $Q(x)$ of the appropriate degree, and then compute $R(x) = P(x)Q(x)$. The coefficients of $R(x)$ define a sequence of length $(m, p)$ according to \eqref{eq:rxm1}. One may then determine, via Parry's criterion, the conditions on the coefficients of $P(x)$ under which this sequence is a valid beta expansion.

The following two results demonstrate this technique for $T_6 \cap P_{1, 7}$.
We automate this process in Section \ref{sec:computational results}.

\begin{proposition}\label{x^2+1}
If $\beta$ is a Salem number with minimal polynomial $P(x) = (a, b, c)$, then $\beta \in P_{1,7} \cap C_{x^2 + 1}$ if and only if
\begin{enumerate}[(i)]
\itemsep-0.4em
\item $b \leq -1$
\item $1 \leq c \leq -a$
\item $a \leq 2b$
\item If $a = 2b$ then $1 \leq b+c$.
\end{enumerate}
\end{proposition}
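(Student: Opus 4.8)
The plan is to follow exactly the recipe described just before the statement: fix the co-factor $Q(x)=x^2+1$, form the companion polynomial $R(x)=P(x)(x^2+1)$, read the candidate digit string off the coefficients of $R(x)$ via \eqref{eq:rxm1}, and then invoke Parry's criterion to decide precisely when that string is the greedy expansion of $\beta$. Expanding $P(x)(x^2+1)$ and matching coefficients against \eqref{eq:rxm1} with $p=7$ produces the string $c_1:c_2,\dots,c_8$ with
\[ c_1=-a,\quad c_2=c_6=-b-1,\quad c_3=c_5=-a-c,\quad c_4=-2b,\quad c_7=c_8=-a-1. \]
Since $\beta$ is the unique root of $R(x)$ of modulus greater than $1$ (the factor $x^2+1$ contributes only $\pm i$, and the Salem polynomial $P$ has only the root $\beta>1$ outside the closed unit disk), we have $R(\beta)=P(\beta)(\beta^2+1)=0$, so $\sum_n c_n\beta^{-n}=1$ automatically. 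Hence $\beta\in P_{1,7}\cap C_{x^2+1}$ if and only if this particular string is admissible, i.e.\ is the greedy expansion of \emph{some} number, in which case that number is forced to be $\beta$.

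I would then split the equivalence into its two implications, both governed by Parry's criterion. The digits of any expansion are non-negative, so $c_2\ge 0$ and $c_3\ge 0$ give at once condition (i), $b\le -1$, and the upper bound $c\le -a$ in (ii); everything else—the lower bound $c\ge 1$, together with (iii) and (iv)—must come from the lexicographic inequalities $(c_n)_{n\ge 1}>_{\text{lex}}(c_{k+n-1})_{n\ge 1}$. Because the string is periodic with period $7$ from its second digit, only the shifts $k=2,\dots,8$ need testing, so I would tabulate the seven tails against the master string and compare term by term. The shifts $k=2,6$ (leading digit $c_2$), $k=5$ (leading digit $c_3=c_5$), and $k=7,8$ (leading digit $c_7=c_1-1$) are all decided at the first coordinate, and the resulting inequalities $c_1>c_2$, $c_1>c_3$, $c_1>c_1-1$ turn out to be subsumed by the conditions extracted from the two genuinely binding shifts, $k=3$ and $k=4$.

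The heart of the matter, and the step I expect to be the most delicate, is the handling of the ties at the leading coordinate of the shifts $k=3$ and $k=4$, since this is exactly where the strict bound $c\ge 1$ and the conditional clause (iv) are born. For $k=3$ the leading comparison is $c_1$ versus $c_3$, i.e.\ $-a$ versus $-a-c$, which is strict iff $c\ge 1$; I must verify that the borderline $c=0$ truly fails, and it does, because the comparison then descends to $c_2$ versus $c_4$ and $(-b-1)-(-2b)=b-1<0$ under (i). For $k=4$ the leading comparison is $c_1$ versus $c_4=-2b$, forcing $a\le 2b$ (condition (iii)); and when $a=2b$ the leading terms tie, so I descend to $c_2$ versus $c_5=c_3$, whose difference is $b+c-1$, yielding precisely condition (iv), $b+c\ge 1$. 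In the sub-borderline case $b+c=1$ the second and third coordinates also tie, and I must check that the comparison is finally won at the fourth coordinate, where $c_4=-2b=c_1$ beats $c_7=c_1-1$. Assembling these, (i)--(iv) are together equivalent to admissibility of the string.

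Finally I would confirm that the expansion length is genuinely $(1,7)$ and not something shorter. The preperiod is exactly $1$ because $c_1=-a\neq -a-1=c_8$, so the string is not purely periodic; and since $7$ is prime the period is $1$ or $7$, while period $1$ would force $c_2=c_4$, i.e.\ $-b-1=-2b$ or $b=1$, contradicting (i). This rules out any collapse and closes the equivalence.
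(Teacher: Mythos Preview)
Your proposal is correct and follows essentially the same approach as the paper: compute $R(x)=P(x)(x^2+1)$, read off the candidate digit string via \eqref{eq:rxm1}, and reduce both implications to Parry's criterion together with the check that $(m,p)$ is exactly $(1,7)$. Your organization is slightly more systematic---you explicitly isolate the two binding shifts $k=3$ and $k=4$ and trace the tie-breaking down to the fourth coordinate in the borderline case $a=2b$, $b+c=1$---but the substance is identical to the paper's argument.
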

\begin{proof}
Let $\beta$ be the Salem root of $P(x) = (a, b, c)$, with $a, b,c$ satisfying the conditions (i) to (iv). Consider the co-factor $Q(x) = x^2 + 1$.
We have
\begin{equation}\label{eq:companion}
P(x)(x^2 + 1) = x^8 + ax^7 + (b+1)x^6 + (a+c)x^5 + 2bx^4 + (a+c)x^3 + (b+1)x^2 + ax + 1.
\end{equation}
Now, comparing coefficients with \eqref{eq:rxm1}, this defines the expansion given by
\begin{equation}\label{eq:expansion}
c_1\colon c_2, \dots, c_8 = -a:-b-1,-a-c,-2b,-a-c,-b-1,-a-1,-a-1.
\end{equation}
We claim that this is the beta expansion for some number $\beta' > 0$; that is, that the sequence $(c_n)_{n \geq 1}$ lexicographically dominates the sequence $(c_n)_{n \geq k}$ for all $k \geq 1$. By combining conditions (i) and (iii) we have $c_1 > c_2 = c_6$ and with (ii) we have $c_1 > c_3 = c_5$. Condition (iii) shows only the weak inequality $c_1 \geq c_4$, but if $c_1 = c_4$ then with (iv) we have
\begin{equation*}
 c_2 = -b-1 \geq -b-(b+c) = -2b-c = -a-c = c_5.
\end{equation*}
If we happen to have $c_1 = c_4$ \emph{and} $c_2 = c_5$, then $c_3 = c_6$ but $c_4 > c_7$, showing that indeed $(c_n)_{n \geq 1}$ dominates $(c_n)_{n \geq 4}$ after all. Finally, it is clear that $c_1 > c_7$ and $c_1 > c_8$. 
Therefore \eqref{eq:expansion} indeed satisfies Parry's criterion.

Suppose, as a result, that $\beta'$ is a Parry number with expansion \eqref{eq:expansion}. Then $\beta'$ has companion polynomial $R(x) = P(x)(x^2 + 1)$. Since $\beta'$ is clearly not a root of $x^2 + 1$, it must be a root of $P(x)$. Hence $\beta' = \beta$, and so $\beta$ has the expansion given. We note that $p = 7$ and not another divisor of $7$ (namely, 1), since if $p = 1$ then in particular we would have $c_2 = c_4$, but this is forbidden by (i).

Conversely, if $\beta \in P_{1,7} \cap C_{x^2 + 1}$, then the companion polynomial of $\beta$ is
\[ R(x) = x^8 - c_1x^6 - \dots - c_6x^2 - (c_7 + 1)x - (c_8 - c_1) = P(x)(x^2 + 1). \]
Comparing coefficients with \eqref{eq:companion}, we see that $c_1 = -a$, $c_2 = -b-1$, and so on, so that $\beta$ has the expansion given in \eqref{eq:expansion}. It is easy to see that $P(x)$ must satisfy the conditions (i) to (iv) by noticing that by relaxing any of the conditions we allow an invalid sequence $(c_n)_{n \geq 1}$. For example, if we relax (i) to allow $b = 0$ then $c_2 = -1$, which is impossible. If we relax (ii) to allow $c = 0$ then $c_1 = c_3$ but $-b-1 = c_2 < c_4 = -2b$, and so the sequence $(c_n)_{n \geq 1}$ does not dominate $(c_n)_{n \geq 3}$. The remaining cases are similar and are left for the reader.
\end{proof}

\begin{proposition}\label{x^2-x+1}
If $\beta$ is a degree six Salem number with minimal polynomial $P(x)$, then $\beta \in P_{1,7} \cap C_{x^2 - x + 1}$ if and only if
\begin{align}
P(x) &= x^6 - x^4 - x^3 - x^2 + 1, \text{or} \label{eq:p1} \\
P(x) &= x^6 - x^4 - 2x^3 - x^2 + 1, \text{or} \label{eq:p2} \\
P(x) &= x^6 - 2x^4 - 3x^3 - 2x^2 + 1.\label{eq:p3}
\end{align}
\end{proposition}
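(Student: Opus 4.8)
The plan is to follow exactly the technique used for Proposition~\ref{x^2+1}: fix the co-factor $Q(x) = x^2 - x + 1$, form the companion polynomial $R(x) = P(x)Q(x)$, and read off the candidate digits by comparing with \eqref{eq:rxm1}. A direct multiplication gives
\begin{equation*}
R(x) = x^8 + (a-1)x^7 + (b-a+1)x^6 + (a-b+c)x^5 + (2b-c)x^4 + (a-b+c)x^3 + (b-a+1)x^2 + (a-1)x + 1,
\end{equation*}
so matching coefficients with \eqref{eq:rxm1} forces the expansion $c_1 : c_2, \dots, c_8$ with
\begin{equation*}
c_1 = 1-a, \quad c_2 = c_6 = a-b-1, \quad c_3 = c_5 = b-a-c, \quad c_4 = c-2b, \quad c_7 = c_8 = -a.
\end{equation*}
First I would record the constraints forced by this being a genuine greedy expansion. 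Non-negativity of the digits together with $c_1 \geq 1$ gives $a \leq 0$, $b \leq a-1$ and $2b \leq c \leq b-a$; the necessary part of Parry's criterion that $c_1$ dominate every period digit (the first-coordinate comparisons $c_1 \geq c_2, c_3, c_4$) then adds $b \geq 2a-2$, $c \geq b-1$ and $c \leq 2b-a+1$.

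The key observation, which is what makes the answer a short finite list rather than an infinite family as in Proposition~\ref{x^2+1}, is that these inequalities pin the coefficients down extremely tightly. Since $a \leq 0$ forces $b \leq a-1 \leq -1$, the two lower bounds on $c$ collapse to $c \geq b-1$ and the two upper bounds collapse to $c \leq 2b-a+1$, so the admissible interval $b-1 \leq c \leq 2b-a+1$ has length $b-a+2 \leq 1$. Its non-emptiness forces $b \in \{a-1, a-2\}$, and $c$ is then pinned to an \emph{endpoint} of this interval. At the lower endpoint $c = b-1$ the inequality $c_1 \geq c_3$ becomes an equality, and at the upper endpoint $c = 2b-a+1$ the inequality $c_1 \geq c_4$ becomes an equality; either way some period digit ties with $c_1$. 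This reduces the problem to the three one-parameter families $(b,c) = (a-1,a-1)$, $(b,c) = (a-1,a-2)$, and $(b,c) = (a-2,a-3)$.

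The main obstacle, and the heart of the argument, is the tie-breaking in Parry's criterion, which is exactly where $a$ gets pinned down. In each family the forced tie at position $3$ or $4$ triggers a comparison of the \emph{next} digits; writing these out (for instance, in the family $(b,c) = (a-1,a-1)$ one has $c_4 = c_1$ and must then compare $c_2 = 0$ against $c_5 = -a$) yields in every case the inequality $a \geq 0$. Combined with $a \leq 0$ this forces $a = 0$, and substituting back produces precisely the three polynomials \eqref{eq:p1}, \eqref{eq:p2}, \eqref{eq:p3}. For the converse I would run the construction backwards as in Proposition~\ref{x^2+1}: for each polynomial I would verify that the digit string above satisfies Parry's criterion (so it is the greedy expansion of the unique real root $>1$ of $R(x)$, which must be the Salem root $\beta$ since $\beta$ is not a root of $x^2-x+1$), confirm via Lemma~\ref{ureqs} that $P(x)$ is a genuine degree $6$ Salem polynomial, and note that the period is exactly $7$ rather than $1$, since $7$ is prime and the period is non-constant. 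The one subtlety worth flagging is that the three families are genuinely exhaustive: no solution can escape with $c_1$ \emph{strictly} dominating all period digits, precisely because the interval for $c$ has length at most $1$, so every admissible $c$ is an endpoint and hence produces a tie.
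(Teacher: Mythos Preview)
Your argument is correct and follows the same line as the paper's: compute the candidate digits from $R(x)=P(x)(x^2-x+1)$, use Parry's criterion to force a tie $c_1=c_3$ or $c_1=c_4$, and then tie-break to conclude $a=0$. The paper reaches the tie via a short contradiction (if both inequalities were strict one gets $a-b\leq 0$, contradicting $c_2\geq 0$), which is exactly the content of your ``interval of length $\leq 1$'' observation.

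The one genuine difference is the finishing step. Having obtained $a=0$, the paper appeals to the external fact that there are exactly four degree~6 Salem polynomials with $a=0$ and eliminates the extra one, $x^6-4x^4-7x^3-4x^2+1$, by checking its expansion directly. You instead substitute $a=0$ into your three one-parameter families $(b,c)\in\{(a-1,a-1),(a-1,a-2),(a-2,a-3)\}$ and land immediately on \eqref{eq:p1}--\eqref{eq:p3}, then verify each is Salem via Lemma~\ref{ureqs}. Your route is a bit more self-contained, since it avoids needing the enumeration of trace-zero degree~6 Salem polynomials; the paper's route is shorter but leans on that enumeration.
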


\begin{proof}
It is readily verified (for example, by explicitly computing $\beta$-expansions) that the $\beta$'s with minimal polynomials \eqref{eq:p1} to \eqref{eq:p3} each have $\beta$-expansions with $(m, p) = (1, 7)$ and co-factor $x^2 - x + 1$.

Conversely, suppose that $\beta \in P_{1,7} \cap C_{x^2 - x + 1}$. We will show that $\beta$ has minimal polynomial given by either \eqref{eq:p1}, \eqref{eq:p2}, or \eqref{eq:p3}. Suppose $\beta$ has minimal polynomial $P(x) = (a, b, c)$. Then its companion polynomial is $R(x) = P(x)(x^2 - x + 1)$.
Comparing coefficients with \eqref{eq:rxm1}, we see that $-c_1$ is the $x^7$ coefficient of $P(x)(x^2 - x + 1)$; that is, $c_1 = -a+1$. Similarly, $c_2 = c_6 = a-b-1$, $c_3 = c_5 = -a+b-c$, and $c_4 = -2b+c$.

By Parry's criterion, $c_1 \geq c_3$ and $c_1 \geq c_4$. We claim that either $c_1 = c_3$ or $c_1 = c_4$. Suppose otherwise; so suppose that $c_1 > c_3$ and $c_1 > c_4$, i.e. $c \geq b$ and $2b \geq a+c$. Then
\[ 2b \geq a+c \geq a+b \implies 0 \geq a-b, \]
but this is inconsistent with the fact that $c_2 \geq 0$.

Now if $c_1 = c_3$, it is easily seen that $a \geq 0$ and so $a = 0$. A similar argument shows that if $c_1 = c_4$ then $a = 0$. Hence in all cases, $a = 0$. However, the only Salem polynomials with $a = 0$ are \eqref{eq:p1}, \eqref{eq:p2}, or \eqref{eq:p3} as well as $x^6 - 4x^4 -7x^3 - 4x^2 + 1$. A simple calculation reveals that $(m, p) \neq (1, 7)$ for this latter case, and so we are done.
\end{proof}

\begin{remark}
Combining Propositions \ref{x^2+1} and \ref{x^2-x+1}, we have that
\begin{equation}\label{eq:supseteq}
T_6 \cap P_{1, 7} \supseteq P_{1, 7} \cap (C_{x^2 + 1} \cup C_{x^2 - x + 1}).
\end{equation}
Do we have equality? A quick computation confirms that the answer is no; the Salem polynomial $P(x) = (-5, 6, -7)$ has $(m, p) = (1, 7)$, and yet its co-factor is neither $x^2 + 1$ nor $x^2 - x + 1$ (it is $x^2 + 2x + 1$). We would ideally like to extend \eqref{eq:supseteq} to be an equality, but, in order to do so, we would seemingly need a complete list of all of the co-factors of expansions with $(m, p) = (1, 7)$.
\end{remark}

Using $x^2 + 2x + 1$ as a co-factor, we arrive at
\begin{proposition}\label{x^2+2x+1}
If $\beta$ is a Salem number with minimal polynomial $P(x) = (a,b,c)$, then $\beta \in P_{1,7} \cap C_{x^2 + 2x + 1}$ if and only if
\begin{enumerate}[(i)]
\itemsep-0.4em
\item $a \leq -4$
\item $2 - 2a \leq 2b \leq -3a -2$
\item $a - 2b + 2 \leq 2c \leq-2a -4b$.
\end{enumerate}
\end{proposition}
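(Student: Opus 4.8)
The plan is to reuse verbatim the template from the proof of Proposition \ref{x^2+1}, now with co-factor $Q(x) = x^2+2x+1 = (x+1)^2$. First I would form $R(x) = P(x)(x^2+2x+1)$, expand it, and compare its coefficients against the template \eqref{eq:rxm1} with $p=7$; this reads off the candidate digit string
\[ c_1 \colon c_2, \dots, c_8 = -a-2 : -2a-b-1,\ -a-2b-c,\ -2b-2c,\ -a-2b-c,\ -2a-b-1,\ -a-3,\ -a-3, \]
which has the same palindromic shape as before ($c_2=c_6$, $c_3=c_5$, $c_7=c_8$). Before invoking Parry's criterion I would record two bookkeeping identities that make the whole argument symmetric: the upper bound in (ii), $2b\le -3a-2$, is exactly the statement that (iii) is non-vacuous (i.e.\ that $a-2b+2\le -2a-4b$), and condition (i), $a\le -4$, is exactly the statement that (ii) is non-vacuous ($2-2a\le -3a-2$). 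Hence the genuinely active hypotheses are only the \emph{lower} bound in (ii), the \emph{lower} bound in (iii), and the \emph{upper} bound in (iii); the other two constraints are automatic consequences.

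For the forward direction I would verify that the string is a valid greedy expansion via Parry's criterion. The three active hypotheses translate directly: the lower bound in (ii) is $c_1\ge c_2$, the lower bound in (iii) is $c_1\ge c_4$, and the upper bound in (iii) is the digit nonnegativity $c_3\ge 0$. The remaining nonnegativities $c_1,c_2,c_4,c_7\ge 0$ then follow mechanically, as does the strict bound $2b+c\ge 4$ (using lower (ii), lower (iii) and $a\le -4$), which gives $c_1>c_3=c_5$ outright and disposes of the shifts $k=3,5$; the shifts $k=7,8$ are immediate from $c_7=c_8=-a-3<c_1$. Having obtained lexicographic domination, hence an expansion of some $\beta'>1$ with companion polynomial $P(x)(x+1)^2$, I would argue $\beta'=\beta$: the only root of $P(x)(x+1)^2$ exceeding $1$ is the Salem root of $P(x)$, since $(x+1)^2$ contributes only the root $-1$. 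Finally I would confirm $(m,p)=(1,7)$ exactly: the preperiod is $1$ rather than $0$ because $c_1=-a-2\ne -a-3=c_8$, and the period is the prime $7$ rather than $1$ because $c_3<c_7$ (a consequence of $2b+c\ge 4$), so the periodic block is non-constant.

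The main obstacle is the set of shifts $k\in\{2,4,6\}$, where the first-digit comparison $c_1\ge c_k$ is only weak: the equalities $c_1=c_2$ (when $a+b=1$) and $c_1=c_4$ (when $2c=a-2b+2$) genuinely occur at the boundary of (ii) and (iii), so strict domination must be recovered from the next digit. I expect each case to collapse to a one-line positivity check after substituting the relevant tie relation: when $c_1=c_4$ the next comparison is $c_2$ versus $c_5=c_3$, whose difference simplifies to $-a/2>0$; when $c_1=c_2$, the shift $k=2$ compares $c_2$ versus $c_3$ (difference $c-2a>0$) and the shift $k=6$ compares $c_2$ versus $c_7$ (difference $1>0$). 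This is the precise analogue of the delicate tie-breaking handled by conditions (iii)--(iv) in Proposition \ref{x^2+1}, and is where essentially all the care is required.

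For the converse I would run the argument backwards: given $\beta\in P_{1,7}\cap C_{x^2+2x+1}$, its companion polynomial must be $P(x)(x^2+2x+1)$ (of degree $6+2=8$), so the same coefficient comparison recovers the digit formulas above. The necessary part of Parry's criterion then forces $c_1\ge c_2$, $c_1\ge c_4$ and $c_3\ge 0$ — that is, the lower bounds in (ii) and (iii) together with the upper bound in (iii) — and the two bookkeeping identities from the first paragraph hand back the upper bound in (ii) and condition (i) for free, completing the equivalence.
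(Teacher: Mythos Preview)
Your proposal is correct and follows exactly the template the paper intends: the paper's own proof simply reads ``Left to the reader. Refer to Proposition~\ref{x^2+1} and Proposition~\ref{x^2-x+1} for the techniques needed,'' and your argument is a faithful (indeed, more careful) execution of that template, including the useful observation that (i) and the upper bound in (ii) are merely the non-vacuity of (ii) and (iii) respectively, so that only three Parry-type inequalities are genuinely active. The tie-breaking analysis at $k\in\{2,4,6\}$ and the check that $(m,p)=(1,7)$ exactly are handled correctly.
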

\begin{proof}
Left to the reader. Refer to Proposition \ref{x^2+1} and Proposition \ref{x^2-x+1} for the techniques needed.
\end{proof}

We will show in Section \ref{sec:computational results} that in fact 
\[ T_6 \cap P_{1, 7} = P_{1, 7} \cap (C_{x^2 + 1} \cup C_{x^2 - x + 1} \cup C_{x^2 + 2x + 1}). \]
In other words, we have found all of the co-factors for expansions with $(m, p) = (1, 7)$.

\section{Computing co-factors}\label{sec:computing co-factors}

In this section, we introduce techniques for searching for co-factors. Remember that if $Q(x)$ is a co-factor for some beta expansion, then the roots of $Q(x)$ all lie in the disk $|z| \leq (1 + \sqrt{5})/2$. Bounds on the roots imply bounds on the coefficients, and hence for all $n$ and $(m, p)$, the set of all possible co-factors for elements in $T_n \cap P_{m, p}$ is finite. So, at the very least, there are only a finite number of co-factors to consider. In principle, the methods of the previous section may be used on each of the potential co-factors separately to obtain a complete description of $T_n \cap P_{m, p}$ in terms of coefficients of minimal polynomials.

\subsection{Bounding the number of co-factors}

The purpose of this section is to say more about the set of co-factors for a given $n$ and $(m, p)$ other than that it is finite.
We first formally define the notion of the co-factor set of $T_n \cap P_{m, p}$.
\begin{definition}
Suppose that
\begin{equation}\label{eq:T_ncapP_{m,p}}
T_n \cap P_{m, p} = P_{m, p} \cap \left( \bigcup_{p(x) \in \Lambda(n, m, p)} C_{p(x)} \right)
\end{equation}
for some $\Lambda(n, m, p) \subseteq \Z[x]$, and that $\Lambda(n, m, p)$ is the minimal set (with respect to set inclusion) which satisfies \eqref{eq:T_ncapP_{m,p}}. Then $\Lambda(n, m, p)$ is called the \emph{minimal co-factor set} for $T_n \cap P_{m, p}$. When $n$, $m$, and $p$ are understood, we may write $\Lambda$ instead of $\Lambda(n, m, p)$.
\end{definition}

\begin{example}
The minimal co-factor set for $T_6 \cap P_{1, 5}$ is $\Lambda = \{ 1\}$. Refer to Proposition \ref{prop4.1boyd}.
\end{example}

\begin{proposition}
For all $n$, $m$, and $p$, $\Lambda(n, m, p)$ is finite.
\end{proposition}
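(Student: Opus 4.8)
The plan is to prove finiteness by bounding the \emph{degree} of any co-factor in $\Lambda(n, m, p)$ and then invoking the root-location bound already established in the excerpt to bound the coefficients. First I would observe that if $Q(x) \in \Lambda(n, m, p)$, then by definition there is some $\beta \in T_n \cap P_{m, p} \cap C_{Q(x)}$, so $Q(x)$ is an actual co-factor arising from a genuine expansion of length $(m, p)$. Recall from the discussion around \eqref{eq:r(x)} that such a $\beta$ has companion polynomial $R(x) = P(x)Q(x)$, where $P(x)$ is the minimal polynomial of $\beta$ (of degree $n$), and that $R(x)$ has degree exactly $m + p$ when $p > 0$ (this is clear from the explicit form $R(x) = P_{m+p}(x) - P_m(x)$, whose leading term is $x^{m+p}$). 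Hence $\deg Q(x) = \deg R(x) - \deg P(x) = (m + p) - n$, a fixed quantity depending only on $n$, $m$, and $p$. This already pins down the degree of every element of $\Lambda(n, m, p)$.

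With the degree fixed, the next step is to bound the coefficients. Here I would use the result, quoted in the excerpt from Flatto--Lagarias--Poonen \cite{flatto} and Solomyak \cite{solomyak}, that every root of $R(x)$ other than $\beta$ lies in the disk $|z| \leq (1 + \sqrt{5})/2$, and that consequently every root of the co-factor $Q(x)$ satisfies $|z| \leq (1 + \sqrt{5})/2$. Writing $d = (m + p) - n$ for the common degree and letting $z_1, \dots, z_d$ denote the roots of $Q(x)$, the coefficients of $Q(x)$ are (up to sign) the elementary symmetric functions $e_k(z_1, \dots, z_d)$. Since each $|z_j| \leq (1 + \sqrt{5})/2$, the standard estimate $|e_k| \leq \binom{d}{k}\left(\tfrac{1+\sqrt{5}}{2}\right)^{k}$ bounds the absolute value of every coefficient of $Q(x)$ by a constant depending only on $d$, hence only on $n$, $m$, and $p$.

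Finally, I would conclude by integrality. Since $P(x)$ and $R(x)$ are both integer polynomials and $P(x)$ is monic, the quotient $Q(x) = R(x)/P(x)$ is a \emph{monic integer} polynomial (monic because both $P$ and $R$ are monic). Therefore each of the finitely many coefficients of $Q(x)$ is an integer lying in a bounded interval, so there are only finitely many choices for each coefficient and hence only finitely many candidate polynomials $Q(x)$ of degree $d$ with roots in the prescribed disk. Since $\Lambda(n, m, p)$ is a subset of this finite candidate set, it is finite.

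I do not expect any single step here to be a serious obstacle, since the essential inputs — the root bound and the degree computation — are already available in the excerpt; the proof is essentially an assembly of these facts. The one point requiring a little care is the claim $\deg R = m + p$: one must check that the leading coefficients of $P_{m+p}$ and $P_m$ do not cancel, which is immediate since $\deg P_{m+p} = m + p > m = \deg P_m$ when $p > 0$, and the degenerate case $p = 0$ is excluded for Salem numbers by the elementary observation (noted in the excerpt) that Salem numbers cannot be simple Parry numbers. I would state these verifications briefly rather than belabour them.
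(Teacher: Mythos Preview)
Your proposal is correct and follows essentially the same approach as the paper: the paper's proof is literally ``By the above discussion,'' referring to the paragraph opening Section~\ref{sec:computing co-factors}, which notes that the root bound $|z| \le (1+\sqrt{5})/2$ on $Q(x)$ forces bounds on the coefficients, so that the set of possible co-factors for $T_n \cap P_{m,p}$ is finite. You have simply spelled out the details---fixing $\deg Q = m+p-n$, bounding the coefficients via elementary symmetric functions, and observing that $Q$ is a monic integer polynomial---and handled the $p=0$ edge case explicitly.
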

\begin{proof}{}
By the above discussion.
\end{proof}

The bound $|z| \leq (1+\sqrt{5})/2$ may be used to construct explicit finite approximations for $\Lambda$. The following provides a demonstration for $T_6 \cap P_{1, 7}$.

\begin{lemma}\label{bndQcoeffs}
We have $\Lambda(6, 1, 7) \subseteq \Gamma$, where
\[ \Gamma = \{ x^2 + d_1 + d_0 \in \Z[x] :  |d_1| \leq 3 \text{ and } 0 \leq d_0 \leq 2\}. \]
\end{lemma}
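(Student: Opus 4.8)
The plan is to pin down the shape of any co-factor $Q(x)$ arising from $\beta \in T_6 \cap P_{1,7}$ by a degree count, and then to bound its two coefficients using the root-localization bound recalled in the introduction together with the earlier no-positive-roots proposition. Since $\Lambda(6,1,7)$ is minimal, each of its elements is genuinely the co-factor of some $\beta \in T_6 \cap P_{1,7}$, so it suffices to prove that any such co-factor lies in $\Gamma$.

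First I would establish that $Q(x)$ is a monic quadratic in $\Z[x]$. Because $(m,p)=(1,7)$, the companion polynomial $R(x)$ from \eqref{eq:rxm1} has degree $m+p = 8$, while the minimal polynomial $P(x) = (a,b,c)$ has degree $6$; hence $Q(x) = R(x)/P(x)$ has degree $2$. Both $R(x)$ and $P(x)$ are monic with integer coefficients, so dividing by the monic $P(x)$ forces $Q(x)$ to be monic and to lie in $\Z[x]$ (equivalently, by Gauss's lemma). I would then write $Q(x) = x^2 + d_1 x + d_0$ with $d_1, d_0 \in \Z$ and let $z_1, z_2$ denote its complex roots, so that $d_1 = -(z_1 + z_2)$ and $d_0 = z_1 z_2$ by Vieta's formulas.

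Next I would read off the upper bounds from the root localization. As recalled in the introduction, every root of a co-factor satisfies $|z| \leq \tau := (1+\sqrt 5)/2$. Thus $|d_1| = |z_1 + z_2| \leq 2\tau = 1 + \sqrt 5 < 4$ and $|d_0| = |z_1|\,|z_2| \leq \tau^2 = (3+\sqrt 5)/2 < 3$; since $d_1$ and $d_0$ are integers, this already gives $|d_1| \leq 3$ and $|d_0| \leq 2$. It then remains to upgrade $|d_0| \leq 2$ to $0 \leq d_0 \leq 2$, and this is where the earlier proposition enters: a co-factor of an expansion with $(m,p)=(1,p)$ has no positive roots. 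Consequently either $z_1, z_2$ form a complex-conjugate pair, so $d_0 = |z_1|^2 \geq 0$, or they are both real and non-positive, so $d_0 = z_1 z_2 \geq 0$ as well. In either case $d_0 \geq 0$, and therefore $Q(x) \in \Gamma$.

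The argument is essentially routine once these ingredients are lined up; the only genuinely non-obvious point is the sign constraint $d_0 \geq 0$, which does \emph{not} follow from the modulus bound alone and instead relies on the no-positive-roots proposition. I would take care with the degree-count and integrality steps, since they silently use that $P(x)$ is monic of degree exactly $6$ and that $R(x)$ has degree exactly $m+p$ (which holds precisely because $p > 0$).
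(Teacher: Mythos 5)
Your proof is correct, and for the two modulus bounds it follows the paper exactly: write $Q(x)=x^2+d_1x+d_0$ with roots in $|z|\leq(1+\sqrt5)/2$, and use Vieta to get $|d_1|\leq 1+\sqrt5<4$ and $|d_0|\leq\bigl((1+\sqrt5)/2\bigr)^2<3$, whence $|d_1|\leq 3$ and $|d_0|\leq 2$ by integrality. Where you diverge is the sign constraint $d_0\geq 0$. The paper gets it directly from \eqref{eq:rxm1}: the leading and constant coefficients of $R(x)=P(x)Q(x)$ give $c_1=-a-d_1$ and $c_8=-a-d_1-d_0$, and Parry's criterion $c_1\geq c_8$ forces $d_0\geq 0$. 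You instead invoke the earlier proposition that a co-factor of an expansion with $(m,p)=(1,p)$ has no positive roots, and then split into the complex-conjugate case ($d_0=|z_1|^2\geq 0$) and the two-real-nonpositive-roots case ($d_0=z_1z_2\geq 0$). Both routes are sound and non-circular (that proposition is established independently via Descartes' rule of signs). Your version has the advantage of generalizing immediately to the constant term of higher-degree co-factors for any $(1,p)$, which is essentially the observation the paper reuses later in Proposition \ref{bnd lambda}; the paper's version is more self-contained, needing only the coefficient comparison already set up in \eqref{eq:rxm1}. Your explicit attention to the degree count and to why $Q$ is monic with integer coefficients is a point the paper leaves implicit.
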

\begin{proof}
Let $Q(x) = x^2 + d_1x + d_0 \in \Z[x]$ be a co-factor with roots $\alpha_1, \alpha_2$. Then
\[ Q(x) = x^2 + d_1x + d_0 = x^2 - (\alpha_1 + \alpha_2)x + \alpha_1\alpha_2, \]
so
\[ |d_0| = |\alpha_1||\alpha_2| \leq \left(\frac{1+\sqrt{5}}{2}\right)^2 \approx 2.6 \]
and
\[ |d_1| = |\alpha_1 + \alpha_2| \leq |\alpha_1| + |\alpha_2| \leq 1 + \sqrt{5} \approx 3.2. \]
Moreover, note that if $(c_n)_{n \geq 1}$ is the beta expansion of $\beta$, then from \eqref{eq:rxm1} we have $c_1 = -a - d_1$ and $c_8 = -a -d_1 - d_0$. Parry's criterion guarantees $c_1 \geq c_8$, so $d_0 \geq 0$.
\end{proof}

\begin{remark}
The set $\Gamma$ described in Lemma \ref{bndQcoeffs} is far from minimal. To emphasize this point, note that this result gives $|\Gamma| = 21$ potential co-factors for elements of $T_6 \cap P_{1,7}$; however, of these, 8 have roots outside the disk $|z| \leq (\sqrt{5} + 1)/2$, immediately disqualifying them from being co-factors. 
As it turns out (see Proposition \ref{P_{1,7}}), the minimal co-factor set of $T_6 \cap P_{1, 7}$ has cardinality 3.
\end{remark}

Our goal is to find methods to bound $|\Lambda(n, m, p)|$ that are stronger than the one offered by simply considering the bound on the roots of the co-factors.
The following result is a straightforward generalization of \cite{boyd-s4}*{Lemma 1}, but will nonetheless prove to be useful in this regard.
\begin{lemma}\label{bnda}
If $\beta$ is a Salem Parry number with minimal polynomial $P(x) = (a, b, c)$, then $-a-5 \leq \lfloor \beta \rfloor \leq -a + 3$.
\end{lemma}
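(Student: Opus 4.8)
The plan is to read the bound directly off the location of the six roots of $P(x)$. Since $P(x) = (a,b,c)$ is a degree $6$ Salem polynomial, the discussion following \eqref{eq:deg6salem} tells us its roots are $\beta > 1$, its reciprocal $\beta^{-1} \in (0,1)$, and four conjugates on the unit circle; being non-real and occurring in complex-conjugate pairs, these four may be written as $e^{\pm i\theta_1}$ and $e^{\pm i\theta_2}$. The coefficient of $x^5$ in \eqref{eq:deg6salem} is $a$, so by Vieta's formula the sum of the six roots equals $-a$, giving
\[ \beta + \beta^{-1} + 2\cos\theta_1 + 2\cos\theta_2 = -a. \]

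First I would bound the contribution of the unit-circle roots: since $\cos\theta_j \in [-1,1]$ we have $2\cos\theta_1 + 2\cos\theta_2 \in [-4,4]$, and hence
\[ -a - 4 \le \beta + \beta^{-1} \le -a + 4. \]
Next I would convert this into a bound on $\beta$ alone using the single fact that $0 < \beta^{-1} < 1$, which holds precisely because $\beta > 1$. Writing $\beta = (\beta + \beta^{-1}) - \beta^{-1}$, the upper estimate together with $\beta^{-1} > 0$ yields $\beta < -a + 4$, while the lower estimate together with $\beta^{-1} < 1$ yields $\beta > -a - 4 - 1 = -a - 5$. Thus
\[ -a - 5 < \beta < -a + 4. \]

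To finish, I would take floors. The point requiring a moment's care is that both displayed inequalities are \emph{strict} and that, because $a \in \Z$, both endpoints $-a-5$ and $-a+4$ are integers; this is exactly what pins down the floor. From $\beta > -a-5$ one gets $\lfloor \beta \rfloor \ge -a-5$, and from $\beta < -a+4$ one gets $\lfloor \beta \rfloor \le -a+3$, which is the claimed bound. I expect no genuine obstacle: the only subtlety is the strictness, and this is supplied entirely by the term $\beta^{-1} \in (0,1)$. In particular I would not even need the (true) strict inequalities $\cos\theta_j \in (-1,1)$ that follow from irreducibility of $P(x)$ — the crude closed bounds $\cos\theta_j \in [-1,1]$ already suffice, which is what makes this a routine generalization of \cite{boyd-s4}*{Lemma 1} (where a single conjugate pair widens the interval by $2$ less on each side).
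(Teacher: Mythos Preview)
Your proof is correct and follows essentially the same route as the paper: express $-a$ as the sum of the six roots, bound the contribution of the four unit-circle conjugates by $\pm 4$, peel off $\beta^{-1}\in(0,1)$, and take floors. If anything, your write-up is more careful than the paper's about why the strict inequalities survive the floor operation.
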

\begin{proof}
Let $\beta$, $\beta^{-1}$, $\alpha_1$, $\alpha_1^{-1}$, $\alpha_2$, $\alpha_2^{-1}$ be the roots of $P(x)$. Then
\[ -a = \beta + \beta^{-1} + 2\text{Re}(\alpha_1) + 2\text{Re}(\alpha_2) \]
hence
\[ -a - \beta^{-1} - 4 < \beta < -a - \beta^{-1} + 4 \]
so $-a-5 \leq \lfloor \beta \rfloor \leq -a + 3$.
\end{proof}

Using Lemma \ref{bnda}, a constant bound can be found on the $x^{n-1}$ coefficient of all co-factors.
\begin{proposition}\label{bnd d_n-1}
If $\beta$ is a Salem Parry number with co-factor $Q(x) = x^n + d_{n-1}x^{n-1} + \dots + d_0$, then $-4 \leq d_{n-1} \leq 5$.
\end{proposition}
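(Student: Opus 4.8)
The plan is to extract a linear relation between $d_{n-1}$ and the first digit $c_1 = \lfloor \beta \rfloor$ of the expansion by comparing the two top-order coefficients of the companion polynomial $R(x) = P(x)Q(x)$, and then to invoke Lemma~\ref{bnda}. The whole argument is a coefficient comparison; no analytic input beyond Lemma~\ref{bnda} is needed.

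First I would record the relevant coefficient of $R(x)$ coming from the factorization. Since $P(x) = (a,b,c)$ and $Q(x)$ are both monic, with second coefficients $a$ and $d_{n-1}$ respectively, the product $R(x) = P(x)Q(x)$ is monic of degree $6 + n = m+p$, and its coefficient of $x^{m+p-1}$ equals $a + d_{n-1}$. Next I would compute that same coefficient directly from the definition \eqref{eq:r(x)}. Writing $P_k(x) = x^k - c_1 x^{k-1} - \cdots - c_k$ as in \eqref{eq:P_m(x)}, we have $R(x) = P_{m+p}(x) - P_m(x)$. The term $-c_1 x^{m+p-1}$ comes from $P_{m+p}(x)$, while the subtracted $P_m(x)$ only reaches degree $m$; thus it contributes to the coefficient of $x^{m+p-1}$ precisely when $p = 1$ (via its leading term $x^m$), and not at all when $p \geq 2$. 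Hence the coefficient of $x^{m+p-1}$ in $R(x)$ is $-c_1$ if $p \geq 2$ and $-c_1 - 1$ if $p = 1$, which is consistent with the $m=1$ formula \eqref{eq:rxm1}. Equating the two expressions gives $d_{n-1} = -a - c_1$ when $p \geq 2$ and $d_{n-1} = -a - c_1 - 1$ when $p = 1$.

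Finally, since $c_1 = \lfloor \beta \rfloor$ (the first digit of $d_\beta(1)$ is $\lfloor \beta \cdot 1 \rfloor$), Lemma~\ref{bnda} gives $-a - 5 \leq \lfloor \beta \rfloor \leq -a + 3$, equivalently $-3 \leq -a - \lfloor \beta \rfloor \leq 5$. Substituting, when $p \geq 2$ we obtain $-3 \leq d_{n-1} \leq 5$, and when $p = 1$ we obtain $-4 \leq d_{n-1} \leq 4$; in either case $-4 \leq d_{n-1} \leq 5$, as claimed.

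I expect the only genuinely subtle point to be the split on the value of $p$: it is exactly the $p = 1$ case, where the leading term of $P_m(x)$ climbs to degree $m+p-1$ and shifts the relation by $1$, that widens the range to the stated $-4 \leq d_{n-1} \leq 5$ rather than the tighter $-3 \leq d_{n-1} \leq 5$ one would (incorrectly) conclude by reasoning solely from the $m=1$, $p\ge 2$ form \eqref{eq:rxm1}. This is easy to overlook, so I would treat the two ranges of $p$ explicitly.
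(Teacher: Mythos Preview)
Your proof is correct and follows essentially the same approach as the paper: compute the $x^{m+p-1}$ coefficient of $R(x)$ two ways (from the product $P(x)Q(x)$ and from the definition $P_{m+p}(x)-P_m(x)$), split on $p\geq 2$ versus $p=1$, and then feed $c_1=\lfloor\beta\rfloor$ into Lemma~\ref{bnda}. The paper's proof is organized in exactly this way and arrives at the same two subranges $-3\leq d_{n-1}\leq 5$ and $-4\leq d_{n-1}\leq 4$ before taking their union.
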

\begin{proof}
Suppose $\beta$ has minimal polynomial $P(x) = (a, b, c)$ with eventually periodic expansion $(c_n)_{n \geq 1}$. First note that in the algorithm for calculating greedy expansions, we have $c_1 = \lfloor \beta \rfloor$, so, by Lemma \ref{bnda}, we have $-a-5 \leq c_1 \leq -a+3$.

Now, 
\[ R(x) = P(x)Q(x) = x^{n+6} + (d_{n-1} + a)x^{n+5} + \cdots. \]
We compare the coefficients of $R(x)$ with \eqref{eq:r(x)}. If $p \geq 2$, then $-c_1 = d_{n-1} + a$, which implies $-3 \leq d_{n-1} \leq 5$. If $p = 1$, then $-c_1 = d_{n-1} + a + 1$ so $-4 \leq d_{n-1} \leq 4$. In either case, the claimed bound holds.
\end{proof}

\begin{remark}
The techniques used to prove Proposition \ref{bnd d_n-1} can be used to bound $d_{n-2}$ in terms of the coefficients of $P(x)$. Unfortunately, no constant bound seems to exist for $d_{n-2}$.
It is certainly the case that no constant bound can exist on all of the coefficients of $Q(x)$ at once, since the examples exhibited in Section \ref{sec:arbitrarily large periods} have co-factors with arbitrarily large coefficients.
\end{remark}

The results presented so far in this section can be summarized by the following theorem.
\begin{proposition}\label{bnd lambda}
We have
\[ |\Lambda(n, m, p)| \leq 10 \left(\left\lfloor\left( \frac{1 + \sqrt{5}}{2} \right)^{m+p-n}\right \rfloor + 1 \right )\prod_{k = 2}^{m+p-n-1} B(n, m, p, k). \]
where
\[ B(n, m, p, k) = 2 \left \lfloor\binom{m+p-n}{k}\left(\frac{\sqrt{5} + 1}{2}\right)^k \right \rfloor + 1 \]
\end{proposition}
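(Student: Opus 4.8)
The plan is to bound $|\Lambda(n,m,p)|$ by counting, one coefficient at a time, the monic integer polynomials that can occur as co-factors. Every element of $\Lambda(n,m,p)$ is the co-factor $Q(x)$ of some $\beta \in T_n \cap P_{m,p}$, so it is a monic integer polynomial of degree $d := m+p-n$ (since $\deg R = m+p$ by \eqref{eq:r(x)} and $\deg P = n$), say
\[ Q(x) = x^d + d_{d-1}x^{d-1} + \dots + d_1 x + d_0, \]
with all roots $\alpha_1,\dots,\alpha_d$ in the disk $|z| \le \varphi$, where $\varphi := (1+\sqrt5)/2$. The leading coefficient is forced to be $1$, so it suffices to bound the number of admissible integer values of each of $d_0,\dots,d_{d-1}$ and multiply. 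This turns the finiteness already recorded into an explicit count.

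First I would set up the elementary symmetric-function estimate, which is the routine engine of the argument. By Vieta's formulas the coefficient of $x^{d-k}$ is $(-1)^k e_k(\alpha_1,\dots,\alpha_d)$, and since each root has modulus at most $\varphi$,
\[ |d_{d-k}| = |e_k| \le \binom{d}{k}\varphi^k. \]
Hence $d_{d-k}$ assumes at most $2\lfloor\binom{d}{k}\varphi^k\rfloor + 1 = B(n,m,p,k)$ distinct integer values. Running $k$ from $2$ to $d-1$ covers exactly the coefficients $d_{d-2},\dots,d_1$ and yields the factor $\prod_{k=2}^{d-1} B(n,m,p,k)$.

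It remains to treat the two extreme coefficients, $d_{d-1}$ (the case $k=1$) and $d_0$ (the case $k=d$), where the crude bound above is improved. For $d_{d-1}$ I would simply invoke Proposition \ref{bnd d_n-1}, which gives $-4 \le d_{d-1} \le 5$ and hence at most $10$ values, accounting for the leading factor. For $d_0 = (-1)^d e_d = (-1)^d\alpha_1\cdots\alpha_d$ the modulus bound gives $|d_0| \le \varphi^d$; to obtain the one-sided count $\lfloor\varphi^d\rfloor + 1$ rather than $2\lfloor\varphi^d\rfloor+1$ I would argue $d_0 \ge 0$. Here $d_0$ is the constant term of $R(x)=P(x)Q(x)$, and by \eqref{eq:r(x)} together with the form \eqref{eq:P_m(x)} of $P_k(x)$ the constant term of $R$ equals $c_m - c_{m+p}$; Parry's criterion then forces this to be non-negative, exactly as in the proof of Lemma \ref{bndQcoeffs}. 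Thus $d_0 \in \{0,1,\dots,\lfloor\varphi^d\rfloor\}$, contributing at most $\lfloor\varphi^{m+p-n}\rfloor+1$ values. Multiplying the three contributions gives the stated bound.

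The bookkeeping that each coefficient index $d-k$ pairs with the symmetric function $e_k$ and the power $\varphi^k$ is mechanical, and the symmetric-function estimate itself is routine. The step I expect to require the most care — and which carries the genuine content beyond finiteness — is justifying the two sharpened extreme bounds: applying Proposition \ref{bnd d_n-1} for $d_{d-1}$, and, for $d_0$, verifying both that the constant term of $R$ is $c_m - c_{m+p}$ and that Parry's criterion forces its non-negativity. The non-negativity is transparent when $m=1$ (since then $c_m = c_1$ dominates every later digit, as used in Lemma \ref{bndQcoeffs}); extending it to arbitrary preperiod $m$ is the delicate point, as a priori $c_m - c_{m+p}$ need not be visibly non-negative, and this is where I would focus the argument.
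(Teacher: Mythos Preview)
Your approach is essentially identical to the paper's: bound each middle coefficient $d_{d-k}$ via the elementary symmetric function estimate $|e_k| \le \binom{d}{k}\varphi^k$, then sharpen the two extreme cases by invoking Proposition~\ref{bnd d_n-1} for $d_{d-1}$ and arguing $d_0 \ge 0$ for the constant term. The paper simply cites ``the same argument in the proof of Lemma~\ref{bndQcoeffs}'' for the latter without further detail, so your explicit identification of $d_0 = c_m - c_{m+p}$ and your flag that the general-$m$ case is the delicate point actually goes slightly beyond what the paper writes out.
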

\begin{proof}
Let $\ell = m+p-n$ and suppose that $Q(x) = x^{\ell} + d_{\ell-1}x^{\ell-1} + \dots + d_0$ is the co-factor of some $\beta \in T_n \cap P_{m, p}$. If $\alpha_1, \dots, \alpha_\ell$ are the roots of $Q(x)$, then
\begin{align*}
|d_{\ell-1}| &= |\alpha_1 + \dots + \alpha_\ell| \leq \ell \left(\frac{\sqrt{5} + 1}{2}\right) \\
|d_{\ell-2}| &= |\alpha_1\alpha_2 + \dots + \alpha_1\alpha_\ell + \dots + \alpha_{\ell-1}\alpha_\ell| \leq \binom{\ell}{2} \left(\frac{\sqrt{5} + 1}{2}\right)^2
\end{align*}
which, in general, becomes
\begin{align*}
|d_{\ell-k}| \leq \binom{\ell}{k} \left(\frac{\sqrt{5} + 1}{2}\right)^k.
\end{align*}
So there are at most $2 \left\lfloor \binom{\ell}{k} \left(\frac{\sqrt{5} + 1}{2}\right)^k \right\rfloor + 1$ choices for $d_{n-k}$. In other words,
\begin{align*}
|\Lambda(n, m, p)| &\leq \prod_{k = 1}^{m+p-n} \left(2\left\lfloor\binom{m+p-n}{k}\left(\frac{\sqrt{5} + 1}{2}\right)^k \right \rfloor + 1\right) \\
&= \prod_{k = 1}^{m+p-n} B(n, m, p, k)
\end{align*}
By Proposition \ref{bnd d_n-1}, there are only $10$ choices $d_{n-1}$, and by the same argument in the proof of Lemma \ref{bndQcoeffs}, we must have $d_0 \geq 0$. The result follows.
\end{proof}

\subsection{Computational results}\label{sec:computational results}

In this section we compute explicit minimal co-factor sets for $T_6 \cap P_{1, p}$ for small values of $p$.
The algorithm is a generalization of the methods used to prove Propositions \ref{x^2+1} to \ref{x^2+2x+1}.
First, we find an upper bound $\Gamma$ for $\Lambda(6, 1, p)$ via the method of Lemma \ref{bndQcoeffs}. For each $Q(x) \in \Gamma$, if $Q(x)$ has roots outside of $|z| \leq (1 + \sqrt{5})/2$ then we can immediately reject it as a potential co-factor.
This can be quickly done via the Routh-Hurwitz method (see Section \ref{sec:alternative methods of bounding}).
Otherwise, $Q(x)$ defines a unique beta expansion $(c_n)_{n \geq 1}$ of length $(m, p)$, in terms of the coefficients of $P(x)$, which can be obtained by computing $P(x)Q(x)$ and comparing the coefficients with \eqref{eq:r(x)}. Writing $c_k = c_k(a, b,c)$, we obtain the following system of linear inequalities in $a, b$, and $c$:
\begin{equation}\label{eq:geq0}
\begin{rcases*}
\phantom{c_{m+p}}c_1(a, b, c) \geq 0 \\
\phantom{c_{m+p}}c_2(a, b, c) \geq 0 \\
\phantom{c_{m+p}c_2(a, b, c)}\vdots \\
\phantom{c_1}c_{m+p}(a, b, c) \geq 0
\end{rcases*}
\end{equation}
and, from Parry's criterion,
\begin{equation}\label{eq:geq_parry}
\begin{rcases*}
\phantom{c_{m+p}}c_1(a, b, c) \geq c_2(a, b, c) \\
\phantom{c_{m+p}}c_1(a, b, c) \geq c_3(a, b, c) \\
\phantom{c_{m+p}c_2(a, b, c)}\vdots \\
\phantom{c_{1}}c_{1}(a, b, c) \geq c_{m+p}(a, b, c)
\end{rcases*}
\end{equation}
If the set of degree 6 Salem polynomials which satisfy these systems is empty, then we can safely conclude that $Q(x)$ cannot be a cofactor and so $\Lambda(6, m, p) \subseteq \Gamma \setminus \{Q(x)\}$. We can trim $\Gamma$ in this fashion until no more co-factors can be removed, at which point we have found a closer approximation to the minimal co-factor set for $T_6 \cap P_{m, p}$. 
This process can be automated by using an integer linear programming framework.

Note that the systems \eqref{eq:geq0} and \eqref{eq:geq_parry} are necessary but not sufficient conditions for $Q(x)$ to be a co-factor. There are several complications can that arise. For instance, \eqref{eq:geq_parry} is only a partial implementation of Parry's criterion; in a full implementation, one would need to verify, for example, that if $c_1(a, b, c) = c_2(a, b, c)$ then $c_2(a, b, c) \geq c_3(a, b, c)$, and so on.
Moreover, if a polynomial $P(x) = (a, b,c)$ satisfies \eqref{eq:geq0} and \eqref{eq:geq_parry}, there is no guarantee that $P(x)$ is Salem, nor that it has expansion exactly $(m, p)$ (it could have period a divisor of $p$). One instructive example is for $T_6 \cap P_{1, 10}$. By choosing co-factor $Q(x) = x^5 + 1$, we obtain candidate beta expansion
\begin{align*}
c_1(a,b,c) &= -a && c_7(a,b,c) = -b \\
c_2(a,b,c) &= -b && c_8(a,b,c) = -c \\
c_3(a,b,c) &= -c && c_9(a,b,c) = -b \\
c_4(a,b,c) &= -b && c_{10}(a,b,c) = -a-1 \\
c_5(a,b,c) &= -a-1 && c_{11}(a,b,c) = -a-1 \\
c_6(a,b,c) &= -a-1
\end{align*}
The systems \eqref{eq:geq0} and \eqref{eq:geq_parry} are consistent with respect to this sequence, and one example solution is $(a, b, c) = (-3, 0, 0)$. This sequence also satisfies Parry's criterion and $P(x) = (-3, 0, 0)$ \emph{is} a Salem polynomial, so presumably $x^5 + 1 \in \Lambda(6, 1, 10)$. However, this is not true. This expansion has $p = 5$, not $10$.

Let $\Gamma'$ be the subset of $\Gamma$ obtained after the process of trimming $\Gamma$ using \eqref{eq:geq0} and \eqref{eq:geq_parry}. We have just demonstrated that $\Lambda(6, m, p) \neq \Gamma'$ in general. However, usually $\Gamma'$ will be relatively small, and can simply be checked by hand. Typically for each $Q(x) \in \Gamma'$ one will either find a Salem number which has $Q(x)$ as a co-factor, showing indeed that $Q(x) \in \Lambda(6, m, p)$, or if no example seems to exist, then $Q(x)$ is likely in one of the extraneous cases described in the preceding paragraph.

We now apply this technique to explicitly find $\Lambda(6, 1, 7)$ and $\Lambda(6, 1, 8)$.
\begin{proposition}\label{P_{1,7}}
The minimal co-factor set of $T_6 \cap P_{1, 7}$ is  
\[ \Lambda = \{x^2 - x + 1, x^2 + 1, x^2 + 2x + 1\}. \] 
\end{proposition}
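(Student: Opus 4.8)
The plan is to prove the two inclusions separately. The inclusion $\Lambda \supseteq \{x^2 - x + 1, x^2 + 1, x^2 + 2x + 1\}$ is immediate: Propositions \ref{x^2+1}, \ref{x^2-x+1}, and \ref{x^2+2x+1} each exhibit Salem numbers in $T_6 \cap P_{1,7}$ realizing the respective quadratic as co-factor (the two parameter regions are nonempty, e.g. $(a,b,c)=(-3,-1,1)$ and $(a,b,c)=(-4,5,-6)$, and the three explicit polynomials of Proposition \ref{x^2-x+1} are genuine examples). Hence each of the three quadratics genuinely occurs and so lies in the minimal set $\Lambda$.

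For the reverse inclusion I would run the elimination scheme of Section \ref{sec:computational results} to completion. By Lemma \ref{bndQcoeffs}, every co-factor lies in the $21$-element set $\Gamma = \{x^2 + d_1 x + d_0 : |d_1| \le 3,\ 0 \le d_0 \le 2\}$. Exactly eight of these have a root outside $|z| \le (1+\sqrt5)/2$ (only the real-root cases, where $d_1^2 \ge 4d_0$, can fail, since a complex-root pair has modulus $\sqrt{d_0} \le \sqrt2$), leaving $13$ candidates; three are the desired quadratics, so ten must be ruled out. For each remaining $Q(x) = x^2 + d_1 x + d_0$, expanding $P(x)Q(x)$ and comparing with \eqref{eq:rxm1} writes the digits as linear forms in $(a,b,c)$: $c_1 = -a - d_1$, $c_2 = -b - a d_1 - d_0$, down to $c_7 = -1 - d_1 - a d_0$ and $c_8 = -a - d_1 - d_0$. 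It then suffices to show the digit-nonnegativity system \eqref{eq:geq0} together with the Parry inequalities \eqref{eq:geq_parry} has no solution giving a degree $6$ Salem polynomial.

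I would organize the ten eliminations by $d_0$. The three candidates with $d_0 = 0$ fall at once: applying Parry's criterion at index $k = p+1 = 8$ forces $c_1 > c_8$ strictly (equality would make the sequences $(c_1,c_2,\dots)$ and $(c_8,c_9,\dots)$ identical term by term), so $d_0 = c_1 - c_8 \ge 1$. The candidate $(x-1)^2$ is excluded since $Q(1)=0$ would make $1$ a root of $R = PQ$; but $P(1)\neq 0$ and $R(1) = -(c_2+\cdots+c_8)$ vanishes only when every periodic digit is $0$, i.e. when $\beta$ is a simple Parry number, impossible for a Salem number. The cleanest surviving cases are those with $d_0 = 2$ and $d_1 \ge 0$: here $c_1 \ge c_7$ forces $a \ge -1$ while $c_8 \ge 0$ forces $a \le -d_1 - 2 \le -2$, a contradiction. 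The genuinely laborious cases are $x^2 + x + 1$ and the two remaining $d_0 = 2$ candidates $x^2 - x + 2$ and $x^2 - 2x + 2$, where no single inequality suffices: one first restricts $a$ to a short list using $c_1 \ge c_7$ together with $c_1 = \lfloor\beta\rfloor \ge 1$ and $c_8 \ge 0$, and then combines the remaining $c_i \ge 0$ and Parry constraints to derive contradictory upper and lower bounds on $b$ and $c$ (for $x^2+x+1$, for instance, $c_4 \ge 0$ gives $c \le -2b$, while $c_1 \ge c_3$ gives $b+c \ge 1$ and $c_1 \ge c_6$ gives $b \ge 0$, which are jointly inconsistent). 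I expect this bounded integer case analysis to be the main obstacle, though it is entirely mechanical, being exactly the integer linear programming feasibility check of Section \ref{sec:computational results}.

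Finally I would note a soundness point. Section \ref{sec:computational results} warns that \eqref{eq:geq0} and \eqref{eq:geq_parry} are only necessary, so feasibility alone cannot certify membership; but in every elimination above we conclude \emph{in}feasibility, which genuinely suffices to discard a candidate, so no extraneous-solution subtlety arises. Combining the two inclusions yields $\Lambda(6,1,7) = \{x^2 - x + 1,\ x^2 + 1,\ x^2 + 2x + 1\}$.
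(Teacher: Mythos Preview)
Your proposal is correct and follows essentially the same route as the paper: bound the co-factor set by the $21$-element $\Gamma$ of Lemma~\ref{bndQcoeffs}, eliminate all candidates except the three listed using the digit constraints \eqref{eq:geq0}--\eqref{eq:geq_parry}, and invoke Propositions~\ref{x^2+1}--\ref{x^2+2x+1} to certify that the three survivors actually occur. The only cosmetic difference is that the paper compresses the elimination into the phrase ``a quick computation,'' whereas you carry it out by hand with the sharpened observations $d_0 \ge 1$ (from strict Parry at $k = 8$) and the root-at-$1$ argument for $(x-1)^2$; your remaining $d_0 = 2$ cases with $d_1 < 0$ indeed close in the mechanical way you describe (for instance, for $x^2 - 2x + 2$ one gets $a \in \{-1,0\}$ and then $c_2 \ge 0$ versus $c_1 \ge c_6$ force incompatible bounds on $b$).
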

\begin{proof}
In Lemma \ref{bndQcoeffs}, we showed that an upper bound for $\Lambda(6, 1, 7)$ is
\[ \Gamma = \{ x^2 + d_1 + d_0 \in \Z[x] :  |d_1| \leq 3 \text{ and } 0 \leq d_0 \leq 2\}. \]
A quick computation determines that the only elements of $\Gamma$ for which the systems \eqref{eq:geq0} and \eqref{eq:geq_parry} are simultaneously consistent are $x^2 - x + 1$, $x^2 + 1$, and $x^2 + 2x + 1$. From Propositions \ref{x^2+1}, \ref{x^2-x+1}, and \ref{x^2+2x+1}, we see that these three polynomials are indeed valid co-factors. 
\end{proof}
\begin{remark}
This confirms that Propositions \ref{x^2+1}, \ref{x^2-x+1}, and \ref{x^2+2x+1} give a complete description of degree 6 Salem numbers whose beta expansions have $(m, p) = (1, 7)$.
\end{remark}

\begin{proposition}
The minimal co-factor set of $T_6 \cap P_{1, 8}$ is  
\[ \Lambda = \{x^3 + x^2 + x + 1, x^3 + 2x^2 + 2x + 1\}. \]
\end{proposition}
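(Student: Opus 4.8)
The plan is to run the algorithm of Section \ref{sec:computational results} verbatim, exactly as in the proof of Proposition \ref{P_{1,7}}, but now with a co-factor of degree $m+p-n = 1+8-6 = 3$. Write $Q(x) = x^3 + d_2x^2 + d_1x + d_0$. First I would produce an explicit finite over-approximation $\Gamma \supseteq \Lambda(6,1,8)$ from the root bound $|z| \le (1+\sqrt5)/2$: Proposition \ref{bnd d_n-1} gives $-4 \le d_2 \le 5$, while the elementary-symmetric-function estimates behind Proposition \ref{bnd lambda} give $|d_1| \le \binom{3}{2}\bigl((1+\sqrt5)/2\bigr)^2 < 8$ and $|d_0| \le \bigl((1+\sqrt5)/2\bigr)^3 < 5$, and the argument of Lemma \ref{bndQcoeffs} (reading $c_1$ and $c_9$ off \eqref{eq:rxm1}) forces $d_0 \ge 0$. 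This bounds $\Gamma$ by a few hundred candidates.

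Next I would trim $\Gamma$ down to a small surviving set $\Gamma'$. For each candidate I would discard those with a root outside $|z| \le (1+\sqrt5)/2$ via the Routh--Hurwitz test, and discard those with a positive root, since by the earlier proposition the co-factor of any $(1,p)$ expansion has no positive roots. For each remaining $Q(x)$ I would form $P(x)Q(x)$, read off the candidate digits $c_k(a,b,c)$ by comparison with \eqref{eq:rxm1}, and feed the linear systems \eqref{eq:geq0} and \eqref{eq:geq_parry} to an integer linear program to test feasibility over degree 6 Salem polynomials $P(x) = (a,b,c)$. Candidates whose systems are inconsistent are removed.

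Finally I would settle $\Gamma'$ by hand. Membership of the two claimed co-factors $x^3+x^2+x+1 = (x+1)(x^2+1)$ and $x^3+2x^2+2x+1 = (x+1)(x^2+x+1)$ I would confirm by exhibiting one explicit degree 6 Salem number realizing each with $(m,p)=(1,8)$ (the same way the three co-factors of Proposition \ref{P_{1,7}} were confirmed through Propositions \ref{x^2+1} through \ref{x^2+2x+1}); alternatively one can derive the full inequality description on $(a,b,c)$ as in those propositions and exhibit a solution. Every other survivor must then be shown extraneous. I expect this last elimination to be the main obstacle: because $8$ has the proper divisors $1$, $2$, and $4$, a candidate can satisfy \eqref{eq:geq0} and \eqref{eq:geq_parry}, and even yield a genuine Salem polynomial, yet have actual period a proper divisor of $8$ rather than $8$ itself --- precisely the $x^5+1$ phenomenon for $T_6\cap P_{1,10}$ described above. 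Ruling these out requires checking the relevant digit coincidences (e.g.\ the equalities among the $c_k$ that would force a shorter period) for each survivor, and this case analysis, rather than any of the preceding automated steps, is where the real work lies.
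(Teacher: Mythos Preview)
Your plan is correct and matches the paper's approach: bound the cubic co-factor by the root estimate and $d_0\ge 0$, trim via the linear systems \eqref{eq:geq0}--\eqref{eq:geq_parry}, then hand-check the survivors and exhibit explicit Salem polynomials realising each of the two claimed co-factors.

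One point of calibration: the hand-elimination step does not in fact hinge on the period-divisor phenomenon you flag. After the ILP pass the paper is left with seven survivors; two of them fail the \emph{full} Parry criterion (the ILP only enforces the top-level inequalities $c_1\ge c_k$, not the tie-breaking comparisons), and three others force $a=0$, whereupon one simply checks the four degree~6 Salem polynomials with $a=0$ directly and finds none has $(m,p)=(1,8)$. The remaining two are confirmed by the witnesses $P(x)=(-2,0,1)$ and $P(x)=(-4,6,-7)$. So the ``real work'' is lighter than you anticipate, and of a slightly different flavour: completing Parry's criterion and exploiting a coefficient constraint that collapses the search to finitely many Salem numbers, rather than untangling divisor-of-$8$ period collisions.
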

\begin{proof}
An upper bound for $\Lambda(6, 1, 8)$ is
\[ \Gamma = \{ x^3 + d_2x^2 + d_1x + d_0 \in \Z[x] : |d_2| \leq 4 \text{ and } |d_1| \leq 7 \text{ and } 0 \leq d_0 \leq 4\}. \]
There are seven elements of $\Gamma$ such that \eqref{eq:geq0} and \eqref{eq:geq_parry} are simultaneously consistent. They are
\begin{align}
x^3 - x^2 - x\label{eq:poly1} \\
x^3 - x^2 - x + 1\label{eq:poly2} \\
x^3 - x^2 + 1 \label{eq:poly3}\\
x^3 - x \label{eq:poly4}\\
x^3 +1 \label{eq:poly5}\\
x^3+x^2 +x +1 \label{eq:poly6}\\
x^3+2x^2 +2x +1\label{eq:poly7}.
\end{align}
Note that \eqref{eq:poly3} and \eqref{eq:poly5} do not result in sequences which satisfy Parry's criterion. Moreover, \eqref{eq:poly1}, \eqref{eq:poly2}, and \eqref{eq:poly4} force $a = 0$, but there are only four degree 6 Salem numbers with $a = 0$ and none of them have $(m, p) = (1, 8)$. The only remaining options are \eqref{eq:poly6} and \eqref{eq:poly7}. These are indeed valid co-factors: for example, the Salem polynomial $P(x) = (-2, 0, 1)$ has \eqref{eq:poly6} as a co-factor and $P(x) = (-4, 6, -7)$ has \eqref{eq:poly7} as a co-factor.
\end{proof}

\subsection{Alternative methods of bounding $\Lambda(n, m, p)$}\label{sec:alternative methods of bounding}

The algorithm from the previous section can be used to determine $\Lambda(6,1 , p)$ for small values of $p$. The execution speed depends mainly on the size of the upper bound $\Gamma$, and typically $\Gamma$ tends to be much larger than $\Lambda$. Since checking roots is typically much faster than verifying feasibility of \eqref{eq:geq0} and \eqref{eq:geq_parry}, in practice one usually computes the roots of $Q(x) \in \Gamma$ to first confirm that its roots lie in $|z| \leq (1 + \sqrt{5})/2$.
As a result, a lot of time is wasted on computing the roots of potential co-factors. 
The purpose of this section is to develop methods to filter out these polynomials without having to compute their roots.

To this end, we introduce the Routh-Hurwitz criterion, a root test which is commonly used in control system theory.
Suppose that $p(x) = a_nx^n + a_{n-1}x^{n-1} + \dots + a_0 \in \R[x]$, with $a_n \neq 0$. We can construct the so-called \emph{Routh table} of $p(x)$ as follows:
\begin{center}
\begin{tabular}{ c|cccc } 
 $x^n$ &  $a_n$ & $a_{n - 2}$ & $a_{n - 4}$ & $\cdots$ \\
 $x^{n-1}$ & $a_{n-1}$ & $a_{n-3}$ & $a_{n-5}$ & $\cdots$ \\
 $x^{n-2}$ & $b_1$ & $b_2$ & $b_3$ & $\cdots$ \\ 
 $x^{n-3}$ & $c_1$ & $c_2$ & $c_3$ & $\cdots$ \\ 
 $\vdots$ & $\vdots$ & $\vdots$ & $\vdots$ & $\ddots$ \\ 
 $x^0$ & $k_1$ & $k_2$ & $k_3$ & $\cdots$
\end{tabular}
\end{center}
where
\begin{align*} 
b_1 &= -\frac{1}{a_{n-1}}\begin{vmatrix} a_n & a_{n-2} \\ a_{n-1} & a_{n-3} \end{vmatrix} \\[1em]
b_2 &= -\frac{1}{a_{n-1}}\begin{vmatrix} a_n & a_{n-4} \\ a_{n-1} & a_{n-5} \end{vmatrix} \\[1em]
c_1 &= -\frac{1}{b_1}\begin{vmatrix} a_{n-1} & a_{n-3} \\ b_1 & b_2 \end{vmatrix}
\end{align*} 
and so on. Note that in constructing the Routh table, we divide by elements of the first column; hence if any of the elements in the first column are zero then the Routh table may not be defined.

\begin{theorem}[Routh-Hurwitz Criterion \cite{dorf}*{pp. 391--399}]
The polynomial $p(x)$ has all of its roots in the open left half plane if and only if its Routh table is defined and all of the first-column elements have the same sign.
\end{theorem}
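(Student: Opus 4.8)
The plan is to reduce the stated criterion to the classical Hurwitz determinant test and then to establish that test via the argument principle. Since replacing $p(x)$ by $-p(x)$ leaves the root set unchanged while negating every first-column entry simultaneously, I may assume $a_n > 0$; the hypothesis that all first-column elements share a common sign then reads simply as ``every first-column element is positive.'' I would introduce the $n \times n$ Hurwitz matrix
\[
H = \begin{pmatrix} a_{n-1} & a_{n-3} & a_{n-5} & \cdots \\ a_n & a_{n-2} & a_{n-4} & \cdots \\ 0 & a_{n-1} & a_{n-3} & \cdots \\ 0 & a_n & a_{n-2} & \cdots \\ \vdots & \vdots & \vdots & \ddots \end{pmatrix},
\]
with leading principal minors $\Delta_1, \dots, \Delta_n$ and the convention $\Delta_0 = 1$, and aim to show that stability is equivalent to $\Delta_1, \dots, \Delta_n > 0$.

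The algebraic core is the identity tying the Routh array to these minors. I would verify, by recognizing the Routh recursion as structured Gaussian elimination on $H$, that the first-column entries below the top row are exactly the pivots
\[
a_{n-1} = \frac{\Delta_1}{\Delta_0}, \quad b_1 = \frac{\Delta_2}{\Delta_1}, \quad c_1 = \frac{\Delta_3}{\Delta_2}, \quad \dots, \quad \frac{\Delta_n}{\Delta_{n-1}}.
\]
The base case $b_1 = (a_{n-1}a_{n-2} - a_n a_{n-3})/a_{n-1} = \Delta_2/\Delta_1$ is immediate from the definition in the Routh table, and the general step follows by induction on the row index. Granting this, and using $a_n > 0$ together with $\Delta_0 = 1 > 0$, a telescoping induction shows that all first-column entries are positive if and only if $\Delta_k > 0$ for every $k$. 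In particular, the requirement that the Routh table be \emph{defined} is subsumed: strict positivity of every $\Delta_k$ forces every pivot to be nonzero, so a vanishing first-column entry (an undefined table) can never coincide with stability.

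It then remains to prove Hurwitz's criterion itself: for $a_n > 0$, all roots of $p$ lie in the open left half plane if and only if $\Delta_1, \dots, \Delta_n > 0$. I would obtain this analytically from the argument principle applied to the boundary of a large right-half disk. On the outer semicircle $p(z) \sim a_n z^n$, contributing $-n\pi$ to the total change of $\arg p$, so the count of right-half-plane zeros vanishes precisely when the net change in $\arg p(i\omega)$, as $\omega$ increases from $-\infty$ to $+\infty$, equals $n\pi$. This monotone winding is equivalent, by the Hermite--Biehler theorem, to strict interlacing of the real zeros of the even and odd parts of $p$; and the Routh algorithm is exactly the Euclidean--Sturm procedure computing the Cauchy index that governs this interlacing, with the signs of the successive pivots recording the outcome. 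Equivalently, one passes from the interlacing to positivity of the $\Delta_k$ through the associated Hermite quadratic form. This analytic passage from root location to sign data is the main obstacle: the determinant identity of the second paragraph is essentially bookkeeping, whereas establishing Hermite--Biehler requires the argument-principle count together with careful handling of the degenerate configurations, where a pivot vanishes ($\Delta_k = 0$) exactly when $p$ has a purely imaginary root or a symmetric root pair and stability fails.
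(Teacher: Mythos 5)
The paper does not prove this statement at all: it is imported verbatim as a textbook result, with the citation to Dorf standing in for the argument, and the paper only ever \emph{uses} the criterion (via a M\"obius change of variable) to test whether candidate co-factors have all roots in a disk. So there is no in-paper proof to match your attempt against; what you have written is a from-scratch outline of the classical proof, and as an outline it is sound and follows the standard route (Gantmacher-style): normalize so $a_n>0$, identify the first-column Routh entries below $a_n$ with the pivot ratios $\Delta_k/\Delta_{k-1}$ of the Hurwitz matrix by viewing the Routh recursion as structured Gaussian elimination (your base case $b_1=(a_{n-1}a_{n-2}-a_na_{n-3})/a_{n-1}=\Delta_2/\Delta_1$ agrees with the table as defined in the paper), telescope to get the equivalence with $\Delta_1,\dots,\Delta_n>0$, and then prove Hurwitz's criterion analytically via the argument principle, the Cauchy index, and Hermite--Biehler. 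Two caveats. First, the analytic core --- the equivalence between left-half-plane stability, interlacing of the zeros of the even and odd parts, and positivity of the Hurwitz minors --- is precisely the part you defer, and you acknowledge this; as written the proposal is a correct roadmap rather than a complete proof, and the degenerate configurations ($\Delta_k=0$ without a purely imaginary root, e.g.\ a root pair symmetric about the imaginary axis) need the careful treatment you allude to but do not supply. Second, your remark that definedness of the table is ``subsumed'' is right in the direction you state (stability forces all $\Delta_k>0$, hence all pivots nonzero), but for the converse direction of the theorem you still need definedness as a hypothesis, since same-signedness is only meaningful once the entries exist; your telescoping argument does handle this correctly once it is made explicit. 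For the purposes of this paper, none of this machinery is needed --- the citation suffices --- but your outline is the right way to prove the theorem if one wanted it self-contained.
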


\begin{example}
The Routh table of $p(x) = x^4 + 2x^3 + 7x^2 + 4x + 3$ is
\begin{center}
\begin{tabular}{ c|cccc } 
 $x^4$ &   $1$ & $7$ & $3$ \\
 $x^3$ &  $2$ & $4$ & $0$ \\
 $x^2$ &  $5$ & $3$ & $0$ \\
 $x^1$ &  $14/5$ & $0$ & $0$ \\
 $x^0$ &  $3$ & $0$ & $0$ \\
\end{tabular}
\end{center}
Since all of the elements in the first column are positive, $p(x)$ has all of its roots in the open left half plane. On the other hand, the polynomial $p(x) = x^4 + 7x^2 + 4x + 3$ has a root outside the open left half plane since its Routh table is not defined.
\end{example}

The Routh-Hurwitz criterion is useful to us because, with a few small modifications, it allows us to answer questions about roots in open disks instead of the open left half plane. 
Say $Q$ is a polynomial, and $f$ is a linear fractional transformation which maps the open left half plane to the disk $|z| < \alpha$ in the complex plane. Then $z_0$ is a root of $Q \circ f$ if and only if $f(z_0)$ is a root of $Q$. It follows that all of the roots of $Q \circ f$ are in the open left half plane if and only if all of the roots of $Q$ are in the disk $|z| < \alpha$. 

Here is perhaps a more instructive example. Let $Q(x) = x^2 + ax + b \in \Z[x]$. The linear fractional transformation
\[ f(z) = 2 \cdot \frac{1+z}{1-z} \]
maps the open left hand plane to the disk $|z| < 2$. Now, consider
\[ Q(f(z)) = \left(2 \cdot \frac{1+z}{1-z}\right)^2 + a\left(2 \cdot \frac{1+z}{1-z}\right) + \left (2 \cdot \frac{1+z}{1-z} \right). \]
 In order to answer the question of whether or not $Q(x)$ has roots inside the disk $|z| < 2$, we may apply the Routh-Hurwitz criterion to $Q(f(z))$. This is because any root of $Q(f(z))$ in the open left hand plane will be a root of of $Q(x)$ in the image of the open left hand plane under $f$, namely, $|z| < 2$. Of course, $Q(f(x))$ is not a polynomial, so we cannot apply Routh-Hurwitz directly to it. However, its roots are the same as the roots of its numerator, which \emph{is} a polynomial.

The following result provides an example of how this can be used to search for co-factors.
\begin{proposition}
\label{filter}
Let $Q(x) = x^3 + d_2x^2 + d_1x + d_0 \in \Z[x]$. Define
\begin{align*}
a_3 &= d_0 + 2d_1 + 4d_2 + 8 \\
a_2 &= -3d_0 - 2d_1 + 4d_2 + 24 \\
a_1 &= 3d_0 - 2d_1 - 4d_2 + 24 \\
a_0 &= -d_0 + 2d_1 - 4d_2 + 8.
\end{align*}
If $a_3 \neq 0$, and either $a_0a_3 \leq 0$ or $a_2a_3 \leq 0$ or $a_1a_2 \leq a_0a_3$, then $Q(x)$ is not the co-factor of any Salem number.
\end{proposition}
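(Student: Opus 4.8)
The plan is to recognize the three displayed inequalities as exactly the ways in which the Routh--Hurwitz criterion can fail for the numerator of $Q\circ f$, where $f(z)=2(1+z)/(1-z)$ is the linear fractional transformation introduced above. Since any co-factor of a Salem number has all its roots in $|z|\leq(1+\sqrt5)/2<2$, and $f$ carries the open left half plane bijectively onto the open disk $|z|<2$, it suffices to prove the contrapositive: if $Q$ is a co-factor, then the numerator $N(z)$ of $Q(f(z))$ has all its roots in the open left half plane, and the Routh--Hurwitz conditions on $N$ are precisely the negations of the three conditions in the statement.

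First I would compute $N(z)=(1-z)^3Q(f(z))$ by expanding $8(1+z)^3+4d_2(1+z)^2(1-z)+2d_1(1+z)(1-z)^2+d_0(1-z)^3$ and collecting powers of $z$; the coefficients come out to be exactly the quantities $a_0,a_1,a_2,a_3$ of the statement, so that $N(z)=a_0z^3+a_1z^2+a_2z+a_3$ with $a_0$ the leading coefficient and $a_3$ the constant term. Two bookkeeping remarks make the root correspondence clean. Since all terms except the first carry a factor $(1-z)$, we get $N(1)=64\neq 0$, so $z=1$ is never a spurious root; and a direct check gives $a_0=-Q(-2)$ and $a_3=Q(2)$. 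Thus if $Q$ is a co-factor, then $\pm 2$ lie outside the closed disk of radius $(1+\sqrt5)/2$, so $a_0,a_3\neq 0$, $N$ is genuinely cubic, and its three roots are exactly the $f$-preimages of the roots of $Q$, all lying strictly in the open left half plane.

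Next I would apply the Routh--Hurwitz criterion to the real cubic $N$. Building the Routh table, the first column is $a_0,\ a_1,\ (a_1a_2-a_0a_3)/a_1,\ a_3$, and all roots lie in the open left half plane if and only if $a_0,a_1,a_2,a_3$ share a common sign and $a_1a_2>a_0a_3$. A short sign argument shows this pair of conditions is equivalent to the three simultaneous inequalities $a_0a_3>0$, $a_2a_3>0$, and $a_1a_2>a_0a_3$: the middle one forces $a_2$ to share the sign of $a_3$, and the last, combined with $a_0a_3>0$, forces $a_1$ to share the sign of $a_2$. Negating, the criterion fails exactly when $a_0a_3\leq 0$ or $a_2a_3\leq 0$ or $a_1a_2\leq a_0a_3$. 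Hence if one of these holds, with $a_3\neq 0$ keeping the table nondegenerate, then $N$ has a root with nonnegative real part, its $f$-image is a root of $Q$ of modulus $\geq 2>(1+\sqrt5)/2$, and $Q$ cannot be a co-factor.

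In short, the logic is: co-factor $\Rightarrow$ roots strictly inside $|z|<2$ $\Rightarrow$ strict Routh--Hurwitz $\Rightarrow$ none of the three displayed inequalities. I expect the only real obstacle to be careful sign bookkeeping: confirming the orientation of $N$ (that $a_0$, not $a_3$, is the leading coefficient), handling strictness at the boundary (a root of $Q$ on $|z|=2$ still disqualifies it since $2>(1+\sqrt5)/2$), and checking that the degenerate possibilities $a_0=0$ or $a_3=0$ either are excluded by hypothesis or already imply $Q(\mp 2)=0$ and hence that $Q$ is not a co-factor. None of this is deep, but getting the equivalence between ``common sign plus product inequality'' and the three asymmetric conditions of the statement exactly right is where an error would most easily creep in.
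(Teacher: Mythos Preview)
Your proposal is correct and follows essentially the same approach as the paper: prove the contrapositive by composing $Q$ with the M\"obius map $f(z)=2(1+z)/(1-z)$, clear denominators to get a cubic, and read off the Routh--Hurwitz conditions. The only cosmetic difference is that you clear denominators with $(1-z)^3$ and obtain $N(z)=a_0z^3+a_1z^2+a_2z+a_3$, whereas the paper uses $(z-1)^3$ and writes the coefficients in the reverse order; the resulting Routh conditions are the same three inequalities either way. Your version is in fact a bit more careful than the paper's: you explicitly note $N(1)=64\neq 0$, identify $a_0=-Q(-2)$ and $a_3=Q(2)$ to dispose of degeneracies, and spell out the equivalence between ``all $a_i$ share a sign and $a_1a_2>a_0a_3$'' and the three product inequalities of the statement.
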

\begin{proof}
The proof is by contrapositive. Suppose $Q(x)$ is the co-factor for some Salem Parry number $\beta$. Then $Q(x)$ must have all of its roots inside the closed disk $|z| \leq (1 + \sqrt{5})/2$. In particular, all of its roots are in the open disk $|z| < 2$. Define the linear fractional transformation $f \colon \C \to \C$ by
\[ f(z) = 2\cdot\frac{1+z}{1-z}. \]
Note that $f$ maps the open left half plane to the open disk $|z| < 2$. We have
\begin{align*}
Q(f(z)) &= \left(2\frac{1+z}{1-z}\right)^3 + d_2\left(2\frac{1+z}{1-z}\right)^2 + d_1\left(2\frac{1+z}{1-z}\right) + d_0 \\
&= \frac{(d_0 + 2d_1 + 4d_2 + 8)z^3}{(z - 1)^3} + \frac{(-3d_0 - 2d_1 + 4d_2 +24)z^2}{(z - 1)^3} \\
&\quad+ \frac{(3d_0 - 2d_1 - 4d_2 + 24)z}{(z - 1)^3} + \frac{-d_0 + 2d_1 - 4d_2 + 8}{(z - 1)^3}.
\end{align*}
Let $a_0, a_1, a_2$, and  $a_3$ be as in the statement of the proposition so that the roots of $Q \circ f$ are the roots of $h(x) := Q(f(z))(z-1)^3 = a_3x^3 + a_2x^2 + a_1x + a_0$. Since $a_3 \neq 0$, $h(x)$ is a degree 3 polynomial with Routh table
\begin{center}
\begin{tabular}{ c|cccc } 
 $x^3$ &  $a_3$ & $a_1$ \\
 $x^{2}$ & $a_2$ & $a_0$ \\
 $x^{1}$ & $b_1$ & $0$ \\
 $x^{0}$ & $a_0$ & $0$ \\
\end{tabular}
\end{center}
where
\[ b_1 = \frac{a_1a_2 - a_0a_3}{a_2}. \]
Since the roots of $Q(x)$ are all inside the disk $|z| < 2$, the roots of $Q \circ f$ are all in the open left half plane. Therefore there must be no sign changes in the first column of the Routh table, hence $a_2a_3 > 0$ and $a_0a_3 > 0$. Also, we must have $b_1/a_3 > 0$ and so 
\[ \frac{a_1a_2 - a_0a_3}{a_2a_3} > 0. \]
Since $a_2a_3 > 0$, this is equivalent to $a_1a_2 > a_0a_3$.
\end{proof}

The above result is of computational importance; it allows us to quickly filter down the list of possible co-factors. If we naively compute the upper bound $\Gamma$ for $\Lambda(6, 1, 10)$ as before, we would have
\[ |\Gamma| = 10 \cdot 53 \cdot 85 \cdot 69 \cdot 12 = 37301400 \]
co-factors to check. Of these, only 5609 have roots exclusively inside $|z| \leq (1 + \sqrt{5})/2$, immediately disqualifying the remaining $37295791$ from being co-factors. The proportion of polynomials in $\Gamma$ with roots outside $|z| \leq (1 + \sqrt{5})/2$ only grows as the degrees of the co-factors grows, and explicitly computing roots of potential co-factors to ensure that they all lie in $|z| \leq (1 + \sqrt{5})/2$ quickly becomes computationally infeasible. Generalizing the criteria of Proposition \ref{filter} to higher degree co-factors allows us to quickly filter down the list of possible co-factors without having to compute any roots. In the $\Lambda(6, 1, 10)$  case, the criteria reduces the list of $37301400$ potential co-factors down to just $158674$. This method can easily be generalized to Salem numbers of any degree.

Note that we may just as well have replaced $f \colon \C \to \C$ in the proof of Proposition \ref{filter} by
\[ f(z) = k \cdot \frac{1 + z}{1 - z} \]
for any $(1 + \sqrt{5})/2 < k < 2$ in order to produce stronger restrictions on $a_3, a_2, a_1$, and $a_0$. However, we usually choose $k = 2$ in practice because it provides adequate restriction while not causing floating-point complications. To avoid floating-point errors, one may also choose any rational approximation of $(1 + \sqrt{5})/2$ (from above) which is of the form $k/2^n$ for some $k, n \in \Z^+$, as numbers of this form have finite binary representations.

Table \ref{tab:Lambda(6, 1, p)} gives the explicit descriptions of $\Lambda(6, 1, p)$ for $5 \leq p \leq 10$.

\begin{table}
\centering
\begin{tabular}{ |r|l| }
 \hline
 $p$ & $\Lambda(6, 1, p)$ \\
 \hline
$5$ & $\phantom{s.}1$ \\
 \hline
$6$ & $\phantom{s.}x+1$ \\
 \hline
$7$ & \begin{tabular}{l}$x^2 + 1$ \\ $x^2 - x + 1$ \\ $x^2 + 2x + 1$\end{tabular} \\
 \hline
$8$ & \begin{tabular}{l}$x^3 + x^2 + x + 1$ \\ $x^3 + 2x^2 + 2x + 1$\end{tabular} \\
\hline
$9$ & \begin{tabular}{l}$x^4 - x^3+ x^2 - x + 1$ \\ $x^4 + x^3 + 2x^2 + x + 1$ \\ $x^4 + 3x^3 + 4x^2 + 3x + 1$ \end{tabular} \\
\hline
$10$ & \begin{tabular}{l}$x^5 + x^4 - x^3 - x^2 + x + 1$ \\ $x^5 + 2x^4 + 2x^3 + 2x^2 + 2x + 1$ \\ $x^5 + 3x^4 + 5x^3 + 5x^2 + 3x + 1$ \end{tabular} \\
 \hline
\end{tabular}
\caption{Complete descriptions of $\Lambda(6, 1, p)$ for small values of $p$}\label{tab:Lambda(6, 1, p)}
\end{table}

\section{Cyclotomic co-factors}\label{sec:cyclotomic co-factors}

As shown in \cite{boyd-s4}, degree 4 Salem polynomials always have cyclotomic co-factors. However, this is not true for higher degrees Salem polynomials, which becomes evident from Table \ref{tab:Lambda(6, 1, p)}.
The main result of this section, Proposition \ref{no cyclotomic}, gives a criterion on the coefficients of $P(x)$ which guarantee that no cyclotomic co-factor can exist. 

We first begin by proving a simple result which we will need for Proposition \ref{no cyclotomic}.

\begin{lemma}\label{l:pgeq3}
If $P(x)$ is a Salem polynomial with a reciprocal co-factor, then $p \geq 3$.
\end{lemma}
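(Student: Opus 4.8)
The plan is to rule out $p=0,1,2$. From the discussion preceding the lemma a Salem number is never a simple Parry number, so $p\geq 1$; it remains to eliminate $p=1$ and $p=2$. The unifying idea is that reciprocity of the co-factor $Q(x)$, together with the reciprocity of the Salem polynomial $P(x)$, makes the companion polynomial $R(x)=P(x)Q(x)$ reciprocal, and a palindromic coefficient sequence is highly restrictive once it is read through the digits of the expansion.

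First I would record the coefficients of $R(x)$ in terms of the digits $c_k$. Writing $R(x)=\sum_{k=0}^{N} R_k x^{N-k}$ with $N=m+p$, comparison of \eqref{eq:r(x)} with \eqref{eq:P_m(x)} gives $R_0=1$, then $R_k=-c_k$ for $1\leq k\leq p-1$, then $R_p=-c_p-1$, and finally $R_k=c_{k-p}-c_k$ for $p+1\leq k\leq N$. Reciprocity of $R$ is the palindrome condition $R_k=R_{N-k}$. Matching the extreme pair $R_0=R_N$ yields $c_m-c_{m+p}=1$, and matching $R_1=R_{N-1}$ (using $R_1=-c_1$ when $p\geq 2$, and $R_1=-c_1-1$ when $p=1$) yields a relation of the form $c_{m+p-1}=c_1+c_{m-1}$. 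Since Parry's criterion forces $c_1=\max_n c_n$, this is consistent with the required $c_{m+p-1}\leq c_1$ only if $c_{m+p-1}=c_1$ and $c_{m-1}=0$. In other words, reciprocity forces the maximal digit $c_1$ to recur inside the period.

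Next I would exploit this recurrence against Parry's criterion. For $p=1$ the period is the single digit $c_{m+1}=c_1$, so the shifted tail $(c_{m+1},c_{m+2},\dots)$ equals $(c_1,c_1,\dots)$; since every digit is at most $c_1$, the sequence $(c_n)_{n\geq 1}$ cannot strictly lexicographically dominate $(c_1,c_1,\dots)$ unless it equals it, forcing $(c_n)$ to be constant, hence purely periodic with minimal preperiod $0$. For $p=2$ I would bring in the next palindrome relation $R_2=R_{N-2}$, which pins down $c_2=c_{m+2}-c_{m-2}\leq c_{m+2}$, and then compare $(c_n)_{n\geq 1}$ term by term with its shift $(c_{m+n})_{n\geq 1}=(c_1,c_{m+2},c_1,c_{m+2},\dots)$. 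At each coordinate, the inherited bound $c_k\leq c_1$ together with $c_2\leq c_{m+2}$ forces either a strict drop, which violates the strict lexicographic domination demanded by Parry's criterion, or an equality.

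The main obstacle, and the step needing the most care, is the boundary case for $p=2$ in which every coordinate of that comparison is an equality. Here the conclusion is that $c_k=c_{m+k}$ for all $k\geq 1$, so $(c_n)_{n\geq 1}$ is purely periodic and its minimal preperiod is $0$. In both the $p=1$ and the $p=2$ arguments this contradicts the lower bound $m+p\geq \deg P\geq 4$ recorded earlier, which forces $m\geq \deg P-2\geq 2$: a minimal preperiod cannot be simultaneously $0$ and at least $2$. Assembling these contradictions eliminates $p\in\{0,1,2\}$ and yields $p\geq 3$.
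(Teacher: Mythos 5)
Your overall strategy --- use reciprocity of $R(x)=P(x)Q(x)$ to match coefficients across the palindrome, feed the resulting digit identities into Parry's criterion, and conclude that the digit sequence would have to be purely periodic, contradicting the lower bound on the minimal preperiod --- is exactly the paper's. Your $p=2$ argument is essentially the paper's: the relations $c_m=c_{m+2}+1$, $c_{m+1}=c_1+c_{m-1}$ and $c_2=c_{m+2}-c_{m-2}$ force $c_{m-1}=0$, $c_{m+1}=c_1$, $c_2=c_{m+2}$, and the alternating comparison with the shifted tails gives $c_k=c_{k+2}$ for all $k$, which is the same contradiction the paper reaches. (Be aware that at the even positions of your term-by-term comparison you need $c_{2j}\leq c_2$, which comes from applying Parry's criterion at the odd shifts you have just forced to start with $c_1$; the paper's ``continue in this way'' hides the same interleaving.)

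The one place your writeup actually breaks is the $p=1$ case. For $p=1$ the relation $R_1=R_{N-1}$ reads $-c_1-1=c_{m-1}-c_m$, i.e.\ $c_m=c_1+c_{m-1}+1$: you dropped the $+1$ when you summarized this as ``a relation of the form $c_{m+p-1}=c_1+c_{m-1}$.'' Note also that for $p=1$ the index $m+p-1=m$ lies in the preperiod, not the period, so your subsequent claim that ``the period is the single digit $c_{m+1}=c_1$'' does not follow from the relation as you stated it: your version yields $c_m=c_1$, and then $R_0=R_N$ gives $c_{m+1}=c_m-1=c_1-1$, so the tail is $(c_1-1,c_1-1,\dots)$ and produces no Parry violation. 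With the correct $+1$ the case closes immediately and more simply: $c_m=c_1+c_{m-1}+1>c_1$ contradicts $c_1=\max_n c_n$ outright, which is precisely the paper's argument (alternatively, the correct relation combined with $c_{m+1}=c_m-1$ does recover $c_{m+1}=c_1$, after which your constant-tail argument works). So the defect is a repairable arithmetic slip rather than a wrong idea, but as written the $p=1$ chain of implications does not go through.
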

\begin{proof}
Suppose that $P(x)$ has reciprocal co-factor $Q(x)$ and that $p = 1$, with $\beta$-expansion given by
\[ c_1, c_2, \dots, c_m: c_{m+1}. \]
Note that the companion polynomial $R(x) = P(x)Q(x)$ is also reciprocal.
But, by definition,
\begin{align*}
R(x) &= P_{m+1}(x) - P_m(x) \\
&= x^{m+1} - (c_1 + 1)x^m - (c_2 - c_1)x^{m - 1} - \dots - c_{m+1} + c_m.
\end{align*}
Since $R(\beta) = 0$, we must have $\deg R(x) \geq 6$, so $m \geq 5$.
Comparing the $x^m$ and $x$ coefficients of $R(x)$, we see that $c_1 = c_m - c_{m - 1} - 1$ and hence that $c_1 < c_m$, contradicting Parry's criterion. Therefore $p \neq 1$.

Now suppose that $p = 2$, with $\beta$-expansion
\[ c_1, c_2, \dots, c_m: c_{m+1}, c_{m+2} \]
Again, $R(x)$ is reciprocal and is given by
\begin{align*}
R(x) &= P_{m+2}(x) - P_m(x) \\
&= x^{m+2} - c_1x^{m+1} - (c_2 + 1)x^m - \dots - c_{m+2} + c_m
\end{align*}
with $m \geq 4$. Comparing coefficients, we arrive at the identities
\begin{align}
c_m &= c_{m+2} +1  &&\text{comparing $x^{m+2}$ and $x^0$} \label{eq:x0}\\
c_1 &= c_{m+1} - c_{m - 1} &&\text{comparing $x^{m+1}$ and $x$} \label{eq:x1}\\
c_2 &= c_{m} - c_{m - 2} - 1 &&\text{comparing $x^{m}$ and $x^2$} \label{eq:x2}
\end{align}
Since $c_1 \geq c_{m+1}$, by Parry's criterion, \eqref{eq:x1} implies that $c_{m - 1} = 0$ and hence that $c_1 = c_{m+1}$. Substituting \eqref{eq:x0} into \eqref{eq:x2} gives
\[ c_2 = c_{m+2} - c_{m-2} \]
so by the same reasoning $c_2 = c_{m+2}$. Now, by Parry's criterion,
\begin{align*} 
(c_1, c_2, c_3, \dots) > (c_{m+1}, c_{m+2}, \dots) = (c_1, c_2, c_1, c_2, \dots).
\end{align*}
In particular, $c_3 \geq c_1$. However, by considering
\[ (c_1, c_2, c_3, \dots) > (c_3, c_4, c_5 \dots) \]
we also have $c_1 \geq c_3$, so $c_1 = c_3$. We can continue in this way to see that $c_k = c_{k+2}$ for all $k \geq 1$, contradicting the fact that $m > 1$.
\end{proof}

\begin{proposition}\label{no cyclotomic}
Suppose that $\beta$ is a Salem number with minimal polynomial $P(x) = (a, b, c)$. If $b \leq 2a -3$ then $P(x)$ has no cyclotomic co-factor.
\end{proposition}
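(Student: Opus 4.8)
The plan is to argue by contradiction: suppose $\beta$ has minimal polynomial $P(x)=(a,b,c)$ with $b\le 2a-3$ and admits a cyclotomic co-factor $Q(x)$, with companion polynomial $R(x)=P(x)Q(x)$. The first move is to reduce to a \emph{reciprocal} co-factor, so that Lemma~\ref{l:pgeq3} becomes available. A cyclotomic $Q$ is a product of cyclotomic polynomials $\Phi_n$, and the only such factor that fails to be reciprocal is $\Phi_1(x)=x-1$. Using \eqref{eq:r(x)} and \eqref{eq:P_m(x)} one computes $R(1)=-(c_{m+1}+\dots+c_{m+p})$, which is strictly negative because a Salem number is not a simple Parry number and hence its period digits are not all zero. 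Thus $1$ is not a root of $R$, so $\Phi_1\nmid Q$, the co-factor $Q$ is reciprocal, and Lemma~\ref{l:pgeq3} gives $p\ge 3$.

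Next I would read the top of the companion polynomial. Writing $Q(x)=x^{\ell}+d_{\ell-1}x^{\ell-1}+d_{\ell-2}x^{\ell-2}+\cdots$ with $\ell=m+p-6$, the $x^{m+p-1}$ and $x^{m+p-2}$ coefficients of $R=PQ$ are $a+d_{\ell-1}$ and $b+ad_{\ell-1}+d_{\ell-2}$. Since $p\ge 3$, comparison with \eqref{eq:r(x)} forces $c_1=-a-d_{\ell-1}$ and $c_2=-b-ad_{\ell-1}-d_{\ell-2}$. By Proposition~\ref{bnd d_n-1} (together with $p\ge2$) we have $d_{\ell-1}\in\{-3,\dots,5\}$, and $c_1=\lfloor\beta\rfloor\ge 1$ refines this to $d_{\ell-1}\le -a-1$.

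The crux is to control $d_{\ell-2}$ using cyclotomicity and then to play Parry's inequalities $0\le c_2\le c_1$ against the hypothesis. Passing to $y=x+x^{-1}$, we have $P(x)=x^3U(y)$ with $U$ the cubic of Lemma~\ref{ureqs}, while $Q$, being a reciprocal product of $\Phi_n$ with $n\ge2$, can be written (after factoring out $x+1$ when $\ell$ is odd) as $x^{k}V(y)$ with $V$ monic and all of its roots of the form $2\cos\theta\in[-2,2)$. In this picture $d_{\ell-1}$ is the subleading coefficient of $V$ and $d_{\ell-2}$ is determined by the first two power sums of the numbers $2\cos\theta$. Substituting the resulting estimates into $c_2\ge 0$ and $c_1-c_2\ge 0$, together with $d_{\ell-1}\le -a-1$ and $b\le 2a-3$, should make one of the Parry inequalities fail; organizing the endgame as a short case analysis over the finitely many admissible values of $d_{\ell-1}$ looks like the cleanest route.

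The hard part will be exactly the bound on $d_{\ell-2}$. Because the period $p$, and hence $\deg Q=\ell$, is unbounded, no constant bound on $d_{\ell-2}$ exists in general---this is the content of the remark following Proposition~\ref{bnd d_n-1}---so the cyclotomic hypothesis must be exploited through the unit-circle (equivalently $[-2,2]$) structure rather than through crude coefficient-size bounds. I expect the genuinely delicate cases to be those in which the roots of $Q$ cluster toward $+1$, i.e. where $d_{\ell-1}$ takes its most negative admissible values and the roots $2\cos\theta$ of $V$ have positive sum; there the first two Parry inequalities remain mutually consistent, and closing the gap appears to require the finer information of which products of $\Phi_n$ can actually occur as co-factors---that is, the full strength of Parry's criterion rather than just $0\le c_2\le c_1$.
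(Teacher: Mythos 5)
Your skeleton matches the paper's: invoke Lemma~\ref{l:pgeq3} to get $p\ge 3$, read $c_1$ and $c_2$ off the two top coefficients of $R=PQ$ below the leading term, and play Parry's inequality $c_1\ge c_2$ against $b\le 2a-3$. Your preliminary reduction (showing $\Phi_1\nmid Q$ via $R(1)=-(c_{m+1}+\dots+c_{m+p})<0$, so that $Q$ is reciprocal and Lemma~\ref{l:pgeq3} applies) is sound and is actually a point the paper glosses over. But the proposal stalls exactly at the crux, and you say so yourself: you never actually determine $d_{\ell-2}$, and hence never obtain the inequality on $c_2$ that is supposed to collide with $b\le 2a-3$. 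The tool you reach for --- locating the roots of $Q$ at $2\cos\theta\in[-2,2]$ and estimating the first two power sums --- only yields size bounds that degrade with $\deg Q$, which, as you correctly note via the remark after Proposition~\ref{bnd d_n-1}, cannot work since $d_{\ell-2}$ is genuinely unbounded over co-factors. So the argument does not close; this is a real gap, not a routine detail.

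The idea you are missing is arithmetic rather than analytic: the paper takes the co-factor to be a single cyclotomic polynomial $\Phi_n(x)=x^{\varphi(n)}+d_1x^{\varphi(n)-1}+d_2x^{\varphi(n)-2}+\dots+1$ and uses the classical formulas $d_1=-\mu(n)$ and $d_2=\tfrac12(\mu(n)^2-\mu(n))$ (with the correction $-\mu(n/2)$ for even $n$). This pins $(d_1,d_2)$ down to seven explicit pairs independently of $n$, and then $c_1=-(a+d_1)$, $c_2=-(b+d_1a+d_2)$ together with $c_1\ge c_2$ and $a\le 0$ (from $c_1=\lfloor\beta\rfloor\ge1$) kill each pair under $b\le 2a-3$ by a one-line computation. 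Note also that you are implicitly attacking a harder statement than the paper proves: you allow $Q$ to be an arbitrary product of cyclotomic polynomials, in which case $d_1=-\sum_i\mu(n_i)$ is no longer confined to $\{-1,0,1\}$ and the seven-case analysis does not directly apply; the paper's proof, as written, treats only the case $Q=\Phi_n$. If you intend the product version, you need an additional argument (or should restrict the claim), and the $[-2,2]$ root-location route will still not supply it.
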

\begin{proof}
Let $\Phi_n(x) = x^{\varphi(n)} + d_{1}x^{\varphi(n)-1} + d_2x^{\varphi(n)-2} + \dots + 1$ be the $n$th cyclotomic polynomial. Then $d_1 = -\mu(n)$ and
\[ d_2 = \begin{cases} \frac{1}{2}(\mu(n)^2 - \mu(n)) &\text{if $n$ is odd} \\ \frac{1}{2}(\mu(n)^2 - \mu(n)) - \mu(n/2) &\text{if $n$ is even}, \end{cases} \]
where $\mu(n)$ is the M\"{o}bius function, which is defined by
\[ \mu(n) = \begin{cases} 1 &\text{if $n$ is square-free with an even number of prime factors} \\ -1 &\text{if $n$ is square-free with an odd number of prime factors} \\ 0 &\text{if $n$ is not square-free}. \end{cases} \]
If $n$ is odd, then $d_2$ is either $0$ or $1$. We have $d_2 = 0$ when $d_1 = -\mu(n) \in \{-1, 0\}$ and $d_2  = 1$ when $d_1 = -\mu(n) = 1$. Analyzing the even case is similar (noting that if $\mu(n) = \pm 1$ then $\mu(n/2) = \mp 1$).
Therefore we can limit $d_1$ and $d_2$ to seven possibilities:
\[ (d_1, d_2) \in \{(-1, 0), (-1, 1), (0, -1), (0, 0), (0, 1), (1, 0), (1, -1) \}. \]
Suppose that $P(x)$ has a cyclotomic co-factor, so its $\beta$ expansion is periodic with expansion
\[ c_1, c_2, \dots, c_m : c_{m+1}, \dots, c_{m+p}. \]
We can write its companion polynomial as
\begin{align*}
R(x) = P(x)\Phi(x) = x^{n + 6} + (a + d_1)x^{n + 5} + (b + d_1a + d_2)x^{n+4} + \cdots.
\end{align*}
By Lemma \ref{l:pgeq3} , $p \geq 3$ and so $c_1 = -(a+d_1)$ and $c_2 = -(b+d_1a+d_2)$. By Parry's criterion,
\begin{align}
 -(a + d_1) = c_1 \geq c_2 = -(b + d_1a + d_2).\label{eq:1}
\end{align}
We can break this inequality into seven cases based on the possible values of $d_1$ and $d_2$, showing in each case that the condition $b \leq 2a - 3$ disallows them. For example, if $d_1 = 1$ and $d_2 = 0$ then \eqref{eq:1} gives $b \geq 1$, but $b \leq 2a-3$ implies $b \leq 2\cdot0 - 3 = -3$. The remaining cases are similar and are omitted.
\end{proof}

\begin{remark}
Many of the polynomials with large expansions listed in Table \ref{tab:largeexps} and Table \ref{tab:all data} satisfy the condition $b \leq 2a - 3$.
It would be interesting to see if more can be said about the co-factors of expansions which have small $b$.
\end{remark}

\section{Examples of expansions with large trace and large period}\label{sec:expansions for large trace}

In this section we investigate the properties of certain families of degree 6 Salem numbers which have large trace.
In particular, we prove that degree 6 Salem numbers can have arbitrarily large periods by exhibiting an infinite family of Salem polynomials, parameterized by $k \in \N$, which have $(m, p) = (1, 8k+6)$ for $k \geq 2$.

\subsection{An example with arbitrarily large trace}

We begin with a remark about the size of $C(\beta)$ when $\text{trace}(\beta)$ can be large. Recall that the probabilistic argument from \cite{boyd-s6} concluded that the size of $C(\beta)$ may be closely related to the length of the beta expansion $m + p$. More precisely, under this model we have
\begin{align*}
E(m+p)\begin{cases} < \infty &\text{if $C(\beta) < 1$}, \\ = \infty &\text{if $C(\beta) \geq 1$}. \end{cases}
\end{align*}
Although Boyd noted certain exceptions to this prediction, such as the Salem polynomial $P(x) = (-9, -37, -55)$ which has $C(\beta) = 6.6956$ but $m+p = 531230$, the data collected seems to confirm a direct relationship between $C(\beta)$ and $m+p$ for degree 6 Salem numbers.
However, this prediction seems to break down quite dramatically in certain cases. Consider the degree 6 polynomial $P(x) = (a, a+1, -2)$, where $a \leq -2$ is even. It has associated polynomial $U(x) = x^3 + ax^2 + (a - 2)x - (2 + 2a)$, which easily satisfies the conditions of Lemma \ref{ureqs}, noting that $U(x)$ is irreducible by Eisenstein's criterion with $p = 2$. Therefore $P(x)$ is a Salem polynomial. By \cite{boyd-s6}*{Prop. 4.1}, we see that the beta expansion for $P(x)$ has $m+p = 6$. However, this family of Salem polynomials achieves arbitrarily large $C(\beta)$. Recall that
\[ C(\beta) = \left(\frac{\pi}{6}\right)^2 \frac{\beta^5}{\text{disc}(\beta)^{1/2}}. \]
For $P(x) = (a, a+1, -2)$, we may calculate $\text{disc}(\beta)$ using a computer algebra system to see that it is a polynomial in $a$ with leading coefficient $-128a^9$. From Proposition \ref{bnda}, $\beta^5$ is approximately $-a^5$ for small $a$, so indeed $C(\beta) \to \infty$ as $a \to -\infty$.
This clearly suggests that there are other factors involved which are not taken into account by Boyd's probabilistic argument.

\subsection{An example with arbitrarily large period}\label{sec:arbitrarily large periods}

Although it is theorized that degree 6 Salem expansions can achieve arbitrarily large expansions, to date the largest expansion known is 
\[ (m, p) = (26440386599, 6051049471) \] 
which is achieved by the Salem polynomial $P(x) = (-5, -2, -11)$. Here, we exhibit a family of polynomials which achieve arbitrarily large periods.

\begin{lemma}\label{pissalem}
If $a \leq -6$ is divisible by $3$, then $P(x) = (a, -2a, 2a-3)$ is a Salem polynomial.
\end{lemma}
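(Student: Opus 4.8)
The plan is to verify that $P(x) = (a, -2a, 2a-3)$ satisfies the hypotheses of Lemma \ref{ureqs}, which gives a sufficient condition for a degree 6 reciprocal polynomial to be Salem. With the given coefficients, the associated cubic from Lemma \ref{ureqs} is
\[ U(x) = x^3 + ax^2 + (-2a-3)x + (2a-3-2a) = x^3 + ax^2 - (2a+3)x - 3. \]
First I would compute the two required strict inequalities $U(2) < 0$ and $U(-2) < 0$ directly as affine functions of $a$. We have $U(2) = 8 + 4a - 2(2a+3) - 3 = 8 + 4a - 4a - 6 - 3 = -1 < 0$, which holds for every $a$, and $U(-2) = -8 + 4a + 2(2a+3) - 3 = -8 + 4a + 4a + 6 - 3 = 8a - 5 < 0$ since $a \leq -6$. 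So both boundary conditions are immediate.

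Next I would handle the ``no integer roots'' condition. Lemma \ref{ureqs} requires $U(n) \neq 0$ for every integer $n$ in the range $-1 \leq n \leq 1 + \max(|a|, |{-2a-3}|, |{-3}|)$, which for $a \leq -6$ is $-1 \leq n \leq 1 + (-2a-3) = -2a - 2$. The cleanest route is to show $U(x)$ has no integer roots at all. By the rational root theorem, any integer root of $U(x)$ must divide the constant term $-3$, so the only candidates are $\pm 1, \pm 3$; I would simply plug these four values in and check each is nonzero using $a \leq -6$ (and the divisibility by $3$, if needed, to rule out edge cases). For instance $U(1) = 1 + a - (2a+3) - 3 = -a - 5 \neq 0$ since $a \leq -6$ forces $-a-5 \geq 1$; $U(-1) = -1 + a + (2a+3) - 3 = 3a - 1 \neq 0$; $U(3) = 27 + 9a - 3(2a+3) - 3 = 3a + 15 = 3(a+5) \neq 0$ since $a \neq -5$; and $U(-3) = -27 + 9a + 3(2a+3) - 3 = 15a - 21 \neq 0$. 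Having ruled out all four divisors of $3$, $U$ has no integer roots whatsoever, which in particular covers the prescribed finite range.

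Finally, Lemma \ref{ureqs} asks for at least one of $U(-1) > 0$, $U(0) > 0$, or $U(1) > 0$. Here $U(1) = -a - 5 > 0$ since $a \leq -6$, so this condition is satisfied. I would then invoke Lemma \ref{ureqs} to conclude that $P(x)$ is a Salem polynomial. I anticipate no serious obstacle: every condition reduces to an affine inequality in $a$ or a finite check against the four divisors of the constant term, all of which are settled by the single hypothesis $a \leq -6$ (the divisibility by $3$ appears needed only to keep $a \neq -5$ safely excluded and, more substantively, to ensure irreducibility or the correct expansion length in the subsequent results, rather than for Salem-ness itself). The only point requiring mild care is confirming that the stated upper limit of the integer-root range is indeed $-2a-2$ rather than something governed by $|{-2a}|$; since the hypothesis references $\max(|a|, |b-3|, |c-2a|)$ with $b = -2a$ and $c = 2a-3$, one has $b - 3 = -2a - 3$ and $c - 2a = -3$, so the maximum is $|{-2a-3}| = -2a - 3$ for $a \leq -6$, confirming the range and making the global ``no integer roots'' argument more than sufficient.
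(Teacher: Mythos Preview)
Your proof is correct and follows the same overall strategy as the paper: compute the associated cubic $U(x) = x^3 + ax^2 - (2a+3)x - 3$ and verify the hypotheses of Lemma~\ref{ureqs}. The boundary checks $U(2) = -1 < 0$, $U(-2) = 8a - 5 < 0$, and $U(1) = -a - 5 > 0$ match the paper exactly.

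The one substantive difference is in how you handle the condition $U(n) \neq 0$. The paper dispatches this in one line by observing that $U(x)$ is irreducible over $\Q$ via Eisenstein's criterion at the prime $3$: the constant term is $-3$, and the middle coefficients $a$ and $-(2a+3)$ are both divisible by $3$ precisely because of the hypothesis $3 \mid a$. You instead invoke the rational root theorem and check the four divisors $\pm 1, \pm 3$ of the constant term directly. Your route is slightly longer but more elementary, and it has the side benefit of revealing that the divisibility hypothesis $3 \mid a$ is not actually needed for the Salem conclusion; $a \leq -6$ alone suffices (your observation that $a \neq -5$ is already forced by $a \leq -6$ is correct). The paper's Eisenstein argument, by contrast, genuinely consumes the $3 \mid a$ hypothesis and in return gives full irreducibility of $U$, not merely the absence of integer roots.
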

\begin{proof}
Consider $U(x) = x^3 + ax^2 - (2a+3)x - 3$ and apply Lemma \ref{ureqs}. We have $U(2) = -1 < 0$ and $U(-2) = 8a - 5 < 0$. Moreover, $U(x)$ is irreducible by Eisenstein's criterion with $p = 3$, so $U(n) \neq 0$ for any integer. Finally, $U(1) = -a-5 > 0$.
\end{proof}

The following result shows that there are degree 6 Salem numbers with arbitrarily long (but finite) periods.

\begin{proposition}\label{arbitrarily large}
If $k \geq 2$, then the degree 6 polynomial $P(x) = (a, -2a, 2a-3)$, where $a = -6k-3$, is Salem with eventually periodic expansion $(m, p) = (1, 8k+6)$.
\end{proposition}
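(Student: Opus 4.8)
The plan is to verify the claimed expansion $(m,p)=(1,8k+6)$ directly via Parry's criterion, using the same technique that was used to prove Propositions \ref{x^2+1} and \ref{x^2-x+1}. The key observation is that since this is an $m=1$ expansion with a known co-factor, I can write down the companion polynomial $R(x) = P(x)Q(x)$ explicitly and then read off the candidate digit sequence $(c_n)_{n\geq 1}$ from the coefficients via \eqref{eq:rxm1}. The first step is to guess the correct co-factor $Q(x)$: since $P(x)$ has degree $6$ and the expansion is $(1,8k+6)$, the companion polynomial $R(x)$ has degree $8k+7$, so $\deg Q(x) = 8k+1$. Given the pattern in Table \ref{tab:Lambda(6, 1, p)} (where co-factors for $(1,p)$ with the more extreme coefficient sign patterns tend to have an alternating-then-symmetric structure), I would conjecture an explicit form for $Q(x)$ parameterized by $k$ — most likely a polynomial whose coefficients follow a repeating block of length $8$, consistent with the period $8k+6$ growing in steps of $8$.

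Once $Q(x)$ is pinned down, the substantive work is the following. First, I would compute $R(x)=P(x)Q(x)$ symbolically in terms of $a=-6k-3$ and extract the digit sequence $c_1,\dots,c_{8k+7}$ as explicit (piecewise-linear) functions of $k$. Second, I would verify that every $c_n$ is a nonnegative integer with $0\le c_n < \beta$; the lower bound $c_n\ge 0$ and the integrality are direct, while $c_n<\beta$ follows from $c_1=\lfloor\beta\rfloor$ together with $c_1\ge c_n$. Third, and most importantly, I would verify Parry's criterion: that the full sequence $(c_n)_{n\ge 1}$ (which repeats with period $8k+6$ after the single preperiod digit) lexicographically dominates every one of its shifts $(c_n)_{n\ge j}$ for $j\ge 2$. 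Because the sequence has a repeating block structure, this reduces to finitely many comparisons governed by the block of length $8$, which should be checkable uniformly in $k$.

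The hard part will be the Parry-criterion verification, for two reasons. The comparisons $(c_n)_{n\ge 1} \ge_{\mathrm{lex}} (c_n)_{n\ge j}$ cannot be reduced to comparing only the first digits $c_1 \ge c_j$; when ties occur one must propagate the comparison deeper into the sequence, exactly as in the delicate tie-breaking argument in the proof of Proposition \ref{x^2+1} (where $c_1=c_4$ forced an examination of $c_2$ versus $c_5$, and then of $c_3,c_6,c_7$). Here the repeating block has length $8$ and there are $8k+6$ distinct shifts to control, so I expect the real difficulty to be organizing these comparisons into a bounded number of cases — indexed by the residue of the shift modulo $8$ — and showing each case holds for all $k\ge 2$. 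I would also need to confirm that the true period is exactly $8k+6$ and not a proper divisor; this follows by checking that the period cannot be shortened, i.e.\ that no smaller period $p'\mid 8k+6$ is consistent with the digit pattern (concretely, that some pair of digits known to repeat with period $8k+6$ fails to repeat with any smaller candidate period, analogous to the final paragraph of the proof of Proposition \ref{x^2+1}).

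Finally, with Parry's criterion established, I invoke the standard closing argument: the sequence is then the greedy expansion of \emph{some} Parry number $\beta'$ whose companion polynomial is exactly $R(x)=P(x)Q(x)$; since $\beta'$ is a root of $R(x)$ but not of $Q(x)$ (its roots lie in $|z|\le(1+\sqrt5)/2$ and hence cannot equal the Salem number $\beta>1$), it must be a root of $P(x)$, forcing $\beta'=\beta$. Lemma \ref{pissalem} already guarantees $P(x)$ is Salem, so this identifies $\beta$ as the Salem root and confirms it has the asserted expansion with $(m,p)=(1,8k+6)$.
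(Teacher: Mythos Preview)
Your plan matches the paper's proof essentially step for step: write down an explicit co-factor $Q(x)$ of degree $8k+1$, form $R(x)=P(x)Q(x)$, read off the candidate digit sequence, verify Parry's criterion uniformly in $k$, check the period is exactly $8k+6$, and conclude via the standard $\beta'=\beta$ argument. The paper merely reverses the order of presentation, writing the expansion first and then exhibiting $Q(x)$, but the logical content is identical.

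Two small corrections to your expectations. First, the co-factor's coefficient pattern has period $4$, not $8$: the paper takes the first half of $Q(x)$ to have coefficients $1,3,5,6,7,9,11,12,\dots$ (the integers below $6k$ congruent to $1,3,5,0 \pmod 6$, in order) and then makes $Q(x)$ reciprocal. Second, the Parry verification is much lighter than you anticipate: once the expansion is written out explicitly, one has $c_1>c_\ell$ for every shift except $\ell=3$ and $\ell=8k+4$, and those two ties are resolved within at most three further digits. So the ``bounded number of cases indexed by residue mod $8$'' organization you propose is unnecessary---only two shifts need any tie-breaking at all.
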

\begin{proof}
By Lemma \ref{pissalem}, $P(x)$ is a Salem polynomial. Now consider the expansion
\begin{align}\label{eq:longexp}
6k&:6k-2, 6k, 2, \omega_1, 6(k-1), 6(k-1), 6(k-1), 6k-2, \nonumber\\
&0,2,1,1,2,0, 6k-2, 6(k-1), 6(k-1), 6(k-1), \omega_2, 2, \\
&6k, 6k-2, 6k-1, 6k-1 \nonumber
\end{align}
where $\omega_1$ and $\omega_2$ are sequences given by
\begin{align*} 
\omega_1 = 6, 6, 6, 9, \dots, 6(k-2), 6(k-2), 6(k-2), 6(k-2)+3 \\
\omega_2 = 6(k-2)+3, 6(k-2), 6(k-2), 6(k-2), \dots, 9, 6, 6, 6
\end{align*}
(so $\omega_2$ is the reverse of $\omega_1$). Note that the length of $\omega_1$ and $\omega_2$ is $4(k-2)$, so the sequence \eqref{eq:longexp} has total length $8k+7$.

For $k \geq 1$, it is easy to see that this sequence satisfies Parry's criterion. We have $c_1 > c_\ell$ for all $\ell$ except when $\ell = 3$ or $\ell = 8k+4$. When $\ell = 3$, we have $c_1 = c_3$ but $c_2 > c_4$, and when $\ell = 8k+4$ we have $c_1 = c_{8k+4}$ and $c_2 = c_{8k+5}$, but $c_3 > c_{8k+6}$. Moreover, we indeed have $(m, p) = (1, 8k+6)$ for this sequence. Note that $p$ must be $8k+6$, and not a (proper) divisor of $8k+6$, since the only occurrences of $6k$ in the period are $c_3$ and $c_{8k+4}$.

Now, consider the polynomial
\[ x^{8k+1} + 3x^{8k} + 5x^{8k-1} + 6x^{8k-2} + 7x^{8k-3} + \dots + 6kx^{4k+2} + 6kx^{4k+1}. \]
The coefficients are exactly the numbers less than $6k$ which are congruent to $1, 3, 5$, or $0$ modulo 6, written in increasing order. We may extend the coefficients of this polynomial to produce a reciprocal polynomial $Q(x)$. We claim that this $Q(x)$ is the co-factor for the expansion given in \eqref{eq:longexp}. This can be proven by explicitly computing the product $P(x)Q(x)$ and verifying that its coefficients agree with the expansion given in \eqref{eq:longexp}. We verify the sequence for $\omega_1$, that is, for the digits $c_5$ to $c_{4k-4}$, and leave the other cases for the reader.

Let $q_i$ be the coefficient of $x^i$ in $Q(x)$. Since the mod 6 residue classes of the coefficients of $Q(x)$ repeat with a period of 4, we may write
\begin{align}\label{eq:coeffsq}
q_i = q_{4m+r} =
\begin{cases}
6m+1, &r = 0 \\
6m+3, &r=1 \\
6m+5, &r=2 \\
6m+6, &r=3
\end{cases}
\end{align}
for $i < 4k$. Then, the $i$th coefficient of $P(x)Q(x)$ is
\begin{align*}
r_i &:= q_i - (6k+3)q_{i-1} + (12k+6)q_{i-2} - (12k+9)q_{i-3} \\
 &\quad+ (12k+6)q_{i-4} - (6k+3)q_{i-5} + q_{i-6}.
\end{align*}
If $4k > i \geq 6$, we may replace the coefficients $q_i$ to $q_{i-6}$ by reading directly from \eqref{eq:coeffsq}.
Again, we end up with different cases depending on the residue class of $i$ modulo 4. For example, if $i$ is a multiple of $4$, say $i = 4m$ for some $m \in \N$, we would have
\begin{align*}
r_{4m} &= 6m+1 - (6k+3)(6m) + (12k+6)(6m-1) - (12k+9)(6m-3) \\
&\quad+(12k+6)(6m-5) - (6k+3)(6m-6) + 6m-7 \\
&= -6m+3.
\end{align*}
The remaining cases can be computed in the same way. In general, we have
\begin{align}\label{eq:coeffsr}
r_i = r_{4m+r} =
\begin{cases}
-6m+3,   &r = 0 \\
-6m, &r=1 \\
-6m, &r=2 \\
-6m, &r=3
\end{cases}
\end{align}
as long as $4k > i \geq 6$. Note that since $P(x)Q(x)$ is reciprocal, we have $r_i = r_{m+p-i}$.

Since $m = 1$ for this expansion, we have $c_i = -r_{m+p-i} = -r_{i}$ for $i \leq m+p-2$ by \eqref{eq:rxm1}. Hence
\begin{align*}
c_i = c_{4m+r} = -r_{4m+r} = 
\begin{cases}
6m-3,   &r = 0 \\
6m, &r=1 \\
6m, &r=2 \\
6m, &r=3
\end{cases}
\end{align*}
as long as $4k > i \geq 6$ (remembering the condition on \eqref{eq:coeffsr}). One can check that this matches $\omega_1$ in this range. Of course, this range misses $c_5$, so we compute $c_5$ explicitly. The $m+p-5$ coefficient of $P(x)Q(x)$ is given by
\begin{align*} 
r_{m+p-5} &= -(6k+3)q_{m+p-6} + (12k+6)q_{m+p-7} - (12k+9)q_{m+p-8} \\
 &\quad+ (12k+6)q_{m+p-9} - (6k+3)q_{m+p-10} + q_{m+p-11} \\
 &= -(6k+3) + 3(12k+6) - 5(12k+9) + 6(12k+6) - 7(6k+3) + 9 \\
 &= -6
 \end{align*}
 so $c_5 = 6$, as required.
\end{proof}

\begin{remark}
Proposition \ref{arbitrarily large} also demonstrates the fact that the coefficients of co-factors can also be arbitrarily large. Moreover, while this result focuses on polynomials of the form $P(x) = (a, -2a, 2a-3)$ for $a = -6k-3$, computational evidence seems to suggest that the same result may be true if we replace $a$ by $-6k-4$ or $-6k-5$ for $k \geq 2$.
\end{remark}

\section{Computing beta expansions}

The computation of beta expansions is based on the greedy algorithm outlined in Section \ref{sec:introduction}. Having computed $c_1, \dots, c_{n}$, consider the polynomial
\[ P_n(x) = x^n - c_1x^{n-1} - \dots - c_{n} \]
so that $r_{n} = P_{n}(\beta)$.
If $\beta$ is a Salem number with minimal polynomial $P(x)$ then $B_n(\beta) = P_n(\beta)$ as long as $B_n(x) \equiv P_n(x) \mod P(x)$. Hence, the algorithm can be implemented by reducing $P_n(x)$ by $P(x)$. More precisely, set $B_0(x) := 1$ and $B_n(x) := x B_{n-1}(x) - c_n$ (so that $c_n = \lfloor \beta B_{n-1}(\beta) \rfloor$ and $r_n = B_n(\beta)$), where we reduce $B_n(x)$ modulo $P(x)$ at each step. This modular reduction keeps the degree of $B_n(x)$ at most the degree of $P(x)$, helping to minimize floating point error.

Since the coefficients of $B_n(x)$ can grow to be arbitrarily large, complications may arise when attempting to compute $c_n = \lfloor \beta B_{n-1}(\beta) \rfloor$ if $\beta B_{n-1}(\beta)$ is exceptionally close to an integer. 
Suppose we choose $\beta_0$ to be an approximation of $\beta$ to some fixed accuracy $\epsilon$. If $B_{n-1}(x) = b_dx^d + \dots + d_1x$, Boyd \cite{boyd-s6} demonstrates that $|\beta r_{n-1} - \beta_0 B_{n-1}(\beta_0)| < \eta$ where
\[ \eta = \sum |b_i|i(\beta_0 + \epsilon)^{i-1}\epsilon. \]
Hence if the distance from $\beta B_{n-1}(\beta)$ to the nearest integer is at most $\eta$, we have $\lfloor \beta_0 B_{n-1}(\beta_0) \rfloor = \lfloor \beta r_{n-1} \rfloor = c_n$ as desired. In this project, choosing $\epsilon = 5 \times 10^{-64}$ was sufficient for all computations.

\subsection{Finding $(m, p)$}\label{sec:finding (m,p)}

Determining the explicit expansion size $(m, p)$ for large expansions is typically done in three steps and is based on the algorithm from \cite{boyd-s6} with minor modifications. The first step is detecting periodicity. We compute $B_n(x)$ and keep track of the $n$ for which the trailing coefficient of $B_n(x)$ is the largest in absolute value found so far; that is, $n$ for which $|L(B_n)| > |L(B_k)|$ for all $k < n$ where $L(B_n)$ denotes the trailing coefficient of $B_n(x)$. If a new record is found at $N_0$, say, then we can safely conclude that $m+p > N_0$. This is the method through which the upper bounds on $m+p$ are computed in Table \ref{tab:largeexps} and Table \ref{tab:all data}.
For periodic expansions, we know that record values must stop occurring after a certain point. Hence if we have computed $B_n(x)$ up to $N$ and the last record was at $N_0$, with $N \gg N_0$, then this may suggest that the sequence is periodic. 

Once an expansion is suspected to be periodic, we may explicitly attempt to find its period. This may be done in a straightforward way if we store a table of all $B_n$ up to $N$; however, for large $N$, this becomes infeasible. Instead, we do the following. We store values of $B_n$ for all $n$ divisible by $10^7$, and when we want to search for a period, we re-start the computation at the largest multiple of $10^7$ less than $N$, say $M$. The idea is that $M$ is likely already in the periodic region of the expansion rather than the preperiod. At each step, we compute $B_{M + n}$ and if at any point we detect that $B_{M} = B_{M + n}$, then the period must be $n$.

Once $p$ is found, there are several ways to find the preperiod $m$. The simplest method is to restart the computation from the beginning, and at each step compute $B_n$ and $B_{n+p}$. The preperiod is then the smallest $n$ for which $B_n = B_{n+p}$. This method is typically only feasible for small values of $m$. A faster way is to take advantage of the stored values of $B_n$ for $n$ divisible by $10^7$. For any multiple $M$ of $10^7$, we can determine whether or not $M < m$ by re-starting the computation of $B_n$ at $n = M$ and checking if $B_{M} = B_{M + p}$. Then, a binary search can be used to find $m$.

Boyd utilized several techniques which we did not implement here. For one, he outlines an algorithm based on integer arithmetic which outright avoids the issue of floating point errors.
Unfortunately, Boyd noted that this algorithm appears to be much slower than the one based on floating point arithmetic. 
Second, he suggests the use of what are called ``markers'' in order to quickly determine the period of an expansion by inspection. 
A marker is simply an ordered pair $(n, B_n)$ where $B_n$ satisfies some unusual property. 
For example, throughout the computation one may store all ordered pairs $(n, B_n)$ where the trailing coefficient of $B_n$ is divisible by $1000$.
By visually inspecting the list of these markers, a period can usually be detected by inspection unless the period is unusually small. 
Of course, the use of the trailing coefficient of $B_n$ here is arbitrary, and any other suitable marker can be used.

\section{Comments and open questions}

In Section \ref{sec:arbitrarily large periods} we showed that $p$ can be arbitrarily large for degree 6 Salem numbers. 
It would be interesting to investigate whether or not the same is true for $m$.
The largest value of $m$ found so far is $m = 26440386599$ which is achieved by the Salem polynomial $P(x) = (-5, -2, -11)$.
It may be possible to exhibit a family of degree 6 Salem numbers with arbitrarily large $m$ in much the same way as we have here for $p$.
While it appears that we have $m = 1$ for the large majority of degree 6 Salem numbers (at least for small trace), 
it seems that values of $m > 1$ occur with some regularity.

Another question that we can ask is, what more can we say about the co-factors of these expansions?
In Section \ref{sec:computing co-factors}, we demonstrated a constant bound on the $d_{n-1}$ coefficient. Perhaps other constant bounds exist on the other coefficients.
Even without constant bounds, it may be possible to find bounds that are stronger than the ones outlined in Section \ref{sec:computing co-factors} which would lend themselves well to the computation of these co-factors.
Moreover, it would seem that there is a much richer structure to these co-factors in the case when $m = 1$. For example, in Section \ref{sec:salem numbers with (m,p)=(1,p)} we showed that co-factors for such expansions have no positive roots.
Additionally, all co-factors with $m = 1$ which we have investigated have been reciprocal, so it may be worthwhile to investigate whether or not this is always the case.

\section*{Acknowledgments}

The author would like to thank their supervisor, Kevin Hare, for his guidance, 
stimulating discussions, 
and for his helpful feedback on the manuscript.

\bibliography{notes}

\newpage

{\small\tabcolsep=3pt
\begin{ThreePartTable}
\begin{TableNotes}
\footnotesize
\item [$*$] Many expansions are still unknown. For these, lower bounds on $m+p$ are given.
\item [$\dagger$] Values are truncated.
\item [$\ddagger$] $C(\beta) = (\pi/6)^2 \beta^5 / \text{disc}(\beta)^{1/2}$.
\item[$\mathsection$] Records indicate the largest value of $|L(B_n)|$ found. 
\end{TableNotes}
\begin{longtable}{ r|c|r|c|r|r }
\caption{All previously unknown beta expansions for $\text{trace}(\beta) \leq 15$}\label{tab:all data}
\endfirsthead

 \insertTableNotes
 \endlastfoot
 $(a, b, c)$ & $\beta^\dagger$ & $m+p >$ & $(m, p)$ & $C(\beta)^{\dagger\ddagger}$ & Record$^\mathsection$ \\
 \hline
 $(-3, -1, -7)$     &  $3.78$ &   $77930975072$ & $*$ &  $0.3342$ & $514138$ \\  
 $(-5, -2, -11)$    &  $5.70$ &     N/A & $(26440386599, 6051049471)$ &  $0.5350$ & N/A \\
 $(-6, -26, -39)$   &  $9.28$ &      $86487966351$ & $*$ &  $7.3275$ & $1787395$ \\
 $(-6, -21, -31)$   &  $8.81$ &     $84721475914$ & $*$ &  $1.4877$ & $1114807$ \\
 $(-7, -29, -43)$   & $10.26$ &   N/A & $(1039779, 90)$ & $1.6081$ & N/A \\
 $(-7, -28, -41)$   & $10.17$ &    N/A & $(159781799, 94829)$ &  $0.7047$ & N/A \\
 $(-8, -33, -49)$   & $11.32$ & $29951657970$ & $*$ &  $2.9741$ & $718862$ \\
 $(-8, -30, -44)$   & $11.08$ &      N/A & $(2121493281, 188611456)$ &  $1.5932$ & N/A \\
 $(-8, -26, -38)$   & $10.76$ &    $90758317726$ & $*$ &  $1.4038$ & $1004347$ \\
 $(-8, -23, -34)$   & $10.51$ &       $82585386156$ & $*$ & $2.6313$ & $1416014$ \\
 $(-8, -3, -17)$    &  $8.58$ &   $33565256553$ & $*$ &  $1.7055$ & $683933$ \\
 $(-9, -35, -51)$   & $12.22$ &       N/A & $(4075324464, 425719617)$ &  $0.8332$ & N/A \\
 $(-9, -28, -41)$   & $11.70$ &    $77441722314$ & $*$ & $17.1476$ & $4483265$ \\
 $(-9, -6, -20)$    &  $9.82$ &    $41820094414$ &$*$ &  $1.1025$ & $704069$ \\
 $(-10, -41, -61)$  & $13.41$ &   $58743875586$ &$*$ & $40.9628$ & $5073073$ \\
 $(-10, -40, -59)$  & $13.34$ &      $76519283803$ &$*$ &  $2.3151$ & $1458931$ \\
 $(-10, -36, -52)$  & $13.07$ &   N/A & $(23087045143, 820400)$ &  $0.6838$ & N/A \\
 $(-10, -4, -21)$   & $10.57$ &     $33346343238$ &$*$ &  $0.9056$ & $496260$ \\
 $(-11, -41, -60)$  & $14.19$ & $4509221815$ &$*$ & $3.3354$ & $272459$ \\
 $(-11, -40, -58)$  & $14.13$ & $4975465702$ &$*$ & $1.0121$ & $155555$ \\
 $(-11, -39, -55)$  & $14.06$ & $5702355915$ &$*$ & $0.4030$ & $162379$ \\
 $(-11, -35, -49)$  & $13.80$ & N/A & $(48516722, 3128603)$ & $0.2436$ & N/A \\
 $(-11, -33, -48)$  & $13.68$ & $6520342537$ &$*$ & $3.2277$ & $343165$ \\
 $(-11, -30, -44)$  & $13.48$ & $3082910698$ &$*$ & $2.4631$ & $202174$ \\
 $(-11, -14, -28)$  & $12.32$ & N/A & $(1490333, 72458)$ & $1.211$ & N/A \\
 $(-11, -11, -26)$  & $12.09$ & N/A & $(1285570, 677)$   & $0.8665$ & N/A \\
 $(-12, -48, -71)$  & $15.42$ & $5332844313$ & $*$& $6.0252$ & $513512$ \\
 $(-12, -47, -69)$  & $15.36$ & N/A & $(18789419, 12521)$ & $2.0830$ & N/A \\
 $(-12, -44, -63)$  & $15.18$ & $5615819438$ & $*$& $0.6271$ & $277263$ \\
 $(-12, -43, -62)$  & $15.12$ & N/A & $(779478947, 96687)$ & $0.7936$ & N/A \\
 $(-12, -42, -61)$  & $15.06$ & $4794842329$ & $*$& $2.5353$ & $341849$ \\
 $(-12, -40, -56)$  & $14.94$ & $5393947499$ & $*$& $0.3002$ & $114214$ \\
 $(-12, -32, -47)$  & $14.45$ & $5725301822$ & $*$& $6.5609$ & $742843$ \\
 $(-12, -16, -31)$  & $13.37$ & $4449213713$ & $*$& $2.5857$ & $316914$ \\
 $(-12, -13, -29)$  & $13.16$ & $5258791867$ & $*$& $3.1256$ & $385702$ \\
 $(-12, -7, -26)$   & $12.71$ & $5891492389$ &$*$& $12.6842$ & $563167$ \\
 $(-12, 10, -25)$   & $11.30$ & $3772492995$ & $*$& $4.4985$ & $480889$ \\
 $(-13, -51, -75)$  & $16.40$ & $4684449005$ & $*$& $2.8476$ & $273591$ \\
 $(-13, -49, -71)$  & $16.28$ & $5385287106$ &$*$ & $1.0417$ & $164413$ \\
 $(-13, -48, -70)$  & $16.23$ & $5121034206$ &$*$ & $2.5854$ & $356368$ \\
 $(-13, -46, -67)$  & $16.12$ & $4557447235$ &$*$ & $22.5968$ & $1591305$ \\
 $(-13, -45, -65)$  & $16.06$ & $5364294220$ &$*$& $1.2787$ & $381811$ \\
 $(-13, -43, -62)$  & $15.95$ & $4664822736$ &$*$ & $1.3348$ & $290440$ \\
 $(-13, -41, -59)$  & $15.83$ & $5438222938$ &$*$ & $1.0866$ & $239409$ \\
 $(-13, -38, -55)$  & $15.66$ & N/A & $(191196227, 16067)$ & $1.9660$ & N/A \\
 $(-13, -35, -51)$  & $15.48$ & $4850105137$ & $*$ & $1.4524$ & $199990$ \\
 $(-13, -30, -41)$  & $15.16$ & N/A & $(137293807, 164656)$ & $0.0885$ & N/A \\
 $(-13, -18, -34)$  & $14.41$ & $4797790801$ &$*$ & $15.0895$ & $685912$ \\
 $(-13, -10, -29)$  & $13.87$ & $3097038217$ &$*$ & $1.6788$ & $197382$ \\
 $(-13, -6, -27)$   & $13.59$ & N/A & $(10913948, 13914931)$ & $0.4296$ & N/A \\
 $(-13, 33, -45)$   & $10.15$ & $5353738198$ &$*$ & $1.9069$ & $417931$ \\
 $(-14, -55, -81)$  & $17.43$ & $3988555193$ &$*$ & $3.9560$ & $330172$ \\
 $(-14, -54, -79)$  & $17.37$ & $5277973189$ &$*$ & $1.9842$ & $244874$ \\
  $(-14, -46, -66)$   & $16.95$ & $2482472077$ & $*$& $0.8903$ & $113161$ \\
 $(-14, -45, -65)$  & $16.89$ & $3077396744$ &$*$ & $3.7782$ & $403613$ \\
 $(-14, -43, -62)$  & $16.79$ & $2706985583$ &$*$ & $2.0378$ & $278308$ \\
 $(-14, -40, -58)$  & $16.62$ & $1871742376$ &$*$ & $5.4243$ & $270753$ \\
 $(-14, -38, -55)$  & $16.51$ & $2009498508$ &$*$ & $0.8809$ & $139034$ \\
 $(-14, -37, -54)$  & $16.45$ & $1746326378$ &$*$ & $2.1663$ & $205357$ \\
 $(-14, -36, -45)$  & $16.37$ & N/A & $(2098011, 112)$ & $0.0895$ & N/A \\
 $(-14, -30, -41)$  & $16.03$ & $483556715$ & $*$& $0.0764$ & $15520$ \\
 $(-14, -16, -34)$  & $15.20$ & $1059562394$ & $*$& $1.5574$ & $111094$ \\
 $(-14, 13, -29)$   & $13.17$ & N/A & $(1428555, 7640)$ & $0.3080$ & N/A \\
 $(-14, 41, -57)$   & $10.60$ & $931823664$ &$*$ & $12.3727$ & $213478$ \\
 $(-15, -58, -85)$  & $18.41$ & $1004214780$ &$*$ & $2.5336$ & $163315$ \\
 $(-15, -56, -82)$  & $18.31$ & $981795645$ &$*$ & $19.5120$ & $300321$ \\
 $(-15, -55, -80)$  & $18.26$ & $747403126$ &$*$ & $2.2502$ & $135618$ \\
 $(-15, -54, -77)$  & $18.20$ & $915780516$ &$*$ & $0.6918$ & $80500$ \\
 $(-15, -46, -66)$  & $17.80$ & $653186611$ &$*$ & $1.0825$ & $64206$ \\
 $(-15, -45, -65)$  & $17.74$ & $870686637$ & $*$& $8.4716$ & $278642$ \\
 $(-15, -43, -62)$  & $17.64$ & N/A & $(11994574, 217750)$ & $1.4952$ & N/A \\
 $(-15, -40, -58)$  & $17.48$ & $626960901$ &$*$ & $1.1215$ & $89321$ \\
 $(-15, -39, -57)$  & $17.43$ & $926257584$ &$*$ & $3.6077$ & $258594$ \\
 $(-15, -21, -39)$  & $16.42$ & $570142197$ &$*$ & $2.0194$ & $67444$ \\
 $(-15, -18, -37)$  & $16.25$ & $978181595$ &$*$ & $7.1219$ & $240549$ \\
 $(-15, -13, -33)$  & $15.94$ & $945829876$ &$*$ & $0.2394$ & $40150$ \\
 $(-15, -11, -33)$  & $15.82$ & $981760213$ &$*$ & $1.5239$ & $78853$ \\
 $(-15, -6, -31)$   & $15.51$ & $862707241$ &$*$ & $0.7906$ & $82583$ \\
 $(-15, -5, -31)$   & $15.45$ & $634681995$ &$*$ & $5.2527$ & $132679$ \\
 $(-15, 19, -34)$   & $13.79$ & $503771992$ &$*$ & $0.6744$ & $42752$ \\
 $(-15, 37, -51)$   & $12.31$ & $754961281$ &$*$ & $10.7717$ & $139783$
\end{longtable}
\end{ThreePartTable}
}

\end{document}